\theoremstyle{theorem}
\newtheorem{thm}{Theorem}[section]
\newtheorem{lem}[thm]{\textbf Lemma}
\newtheorem{cor}[thm]{Corollary}
\newtheorem{prop}[thm]{\textbf Proposition}
\newtheorem{con}[thm]{Conjecture}
\theoremstyle{definition}
\newtheorem{defn}[thm]{Definition}
\newtheorem{exmp}[thm]{\textbf{Examples}}
\newtheorem{prob}[thm]{Problem}
\numberwithin{equation}{section}
\theoremstyle{remark}
\newcommand{\R}{\mathbb{R}}
\newcommand{\Z}{\mathbb{Z}}
\DeclareMathOperator{\sgn}{sgn}
\def\Ddots{\mathinner{\mkern1mu\raise\p@
\vbox{\kern7\p@\hbox{.}}\mkern2mu
\raise4\p@\hbox{.}\mkern2mu\raise7\p@\hbox{.}\mkern1mu}}
\begin{document}
\author[C. Coscia]{Christopher Coscia}
\address{Department of Mathematics, 6188 Kemeny Hall, Dartmouth College, Hanover, NH,
03755-3551.}
\email{christopher.s.coscia.gr@dartmouth.edu}

\author[J. DeWitt]{Jonathan DeWitt
}
\address{Department of Mathematics and Statistics, Haverford College, 370 Lancaster Avenue, Haverford, PA, 19041, USA.}
\email{jdewitt@haverford.edu}

\title{Locally Convex Words and Permutations}

\subjclass[2010]{05A05; 05A15, 05A16, 05A17} 
\keywords{permutations, words, permutation patterns, transfer matrices, asymptotics, generating functions, integer partitions}

\begin{abstract}
We introduce some new classes of words and permutations characterized by the second difference condition $\pi(i-1) + \pi(i+1) - 2\pi(i) \leq k$, which we call the $k$-convexity condition. We demonstrate that for any sized alphabet and convexity parameter $k$, we may find a generating function which counts $k$-convex words of length $n$.  We also determine a formula for the number of 0-convex words on any fixed-size alphabet for sufficiently large $n$ by exhibiting a connection to integer partitions.  For permutations, we give an explicit solution in the case $k = 0$ and show that the number of 1-convex and 2-convex permutations of length $n$ are $\Theta(C_1^n)$ and $\Theta(C_2^n)$, respectively, and use the transfer matrix method to give tight bounds on the constants $C_1$ and $C_2$.  We also providing generating functions similar to the the continued fraction generating functions studied by Odlyzko and Wilf in the ``coins in a fountain'' problem. 
\end{abstract}

%\title{Locally Convex Words and Permutations}
%\author{Chris Coscia, Jon DeWitt, Johnny Paucar}
%\date{\today}

%%%%%%%%%%%%%%%%%%%%%%%%%%%%%%%%%%%%%%
% Now for the actual document

\maketitle

\section{Introduction}
In this paper we investigate permutations $\pi\in S_n$ and words (functions $f: [n] \to [p]$) that obey the condition:
\[
 \pi(i-1) + \pi(i+1) - 2\pi(i) \le k \hspace{.5cm} \text {for all } i\in [2,...,n-1],
\]
and
\[
f(i-1) + f(i+1) - 2f(i) \le k \hspace{.5cm} \text {for all } i\in [2,...,n-1],
\]
respectively, where $k\in \Z_{n\ge 0}$ and $[n]=\{1,2,...,n\}$. We refer to these as $\emph{locally convex}$ permutations and words \emph{with respect to $k$}, or $k$-convex permutations and words. Geometrically, 0-convex permutations and words, such as 1342 or 14444322, respectively, are those such that when the permutation or word entries are plotted against their positions and consecutive entries are connected by straight line segments, the area under the plot is convex. This description is different than that presented in \cite{Albert11}, in which the authors consider the polygon \emph{enclosed} by the plotted points.  Intuitively, an increase in the parameter $k$ represents a relaxation of this condition.

The study of these permutations arises from a problem in a graduate course taught by Jamie Radcliffe and brought to our attention by Jessie Jamieson.  The original problem is stated as follows: 

\begin{prob}
Let $\sigma \in S_n$. If, for all $i\in [n-1]$, we have:
\[
\sigma(i+1)\le \sigma(i)+1
\]
then $\sigma$ is a \emph{slow riser}. Let $Slow_n$ be the number of slow risers in $S_n$. What is $Slow_n$?
\end{prob}
$Slow_n$ is $2^{n-1}$ since for any slow riser $\sigma$ of length $n-1$, there are exactly two places in the one line notation for $\sigma$ in which we can insert $n$ to form a slow riser of length $n$; $n$ may be inserted immediately after $n-1$ or at the beginning of the permutation. This \enquote{extension} map is $1$ to $2$ as removing $n$ from a valid permutation of length $n$ gives us a valid permutation of length $n-1$.

We may abstract the property given above by noticing that it could equivalently be stated that $\sigma\in S_n$ is a slow riser if the first differences of $\sigma$ are bounded above by one. We can generalize this problem by selecting some value other than one (call it $k$) to work with, but the previous argument still works as long as we pick a suitable extension procedure, so we consider instead second differences.  We thank Bill Kay for suggesting this generalization. 

First, we discuss locally convex words, for which, in the case $k=0$, we may enumerate by exploiting a bijection with pairs of integer partitions. In addition, we demonstrate that it is possible to derive a generating functions for locally convex words for any $k$. We then study the case of $k$-convex permutations, solving the case $k = 0$ and deriving asympotic estimates for $k$ equal to $1$ and $2$. We also give generating functions for the cases $k=1,2$. Further, we are interested in seeing whether or not a Marcus-Tardos type result holds for $k$-convex permutations, as we are able to show that it does for the cases $k=1$ and $k=2$. 

The aforementioned generating functions which enumerate 1- and 2-convex permutations are quite similar to the generating function that describes the number of solutions to the ``coins in a fountain'' problem, as described in \cite{Odlyzko88}. As such, further simplifications to our generating functions will be closely related to the study of the generating function for fountains, which admits a similar continued fraction, but does not seem to have a more usable form.

\section{Locally Convex Words}
We wish to count functions $f:[n]\rightarrow [p]$ such that for $i\in \{2,...,n-1\}$ we have 
\begin{equation}\label{eq:convex}
f(i-1) + f(i+1) - 2f(i)\le k
\end{equation}
for some $k\in \Z_{\ge 0}$. We say that such a function is \emph{locally convex with parameter $k$}.  Notice that convex is an appropriate word to use here, as this stipulates a bound on the growth of the first differences:

\[
f(i+1) - f(i) \leq f(i) - f(i-1) + k \implies f(i-1) + f(i+1) - 2f(i) \leq k
\]

Fix $p$ and $k$. Let $f_n(a,b)$, for $a,b\in [p]$, be the number of functions $f:[n]\rightarrow [p]$ such that $f(1)=a$ and $f(2)=b$ that also obey the convexity condition \eqref{eq:convex}.  We will determine the generating function:
\[
F(a,b)=\sum_{n\ge 2} f_n(a,b)x^n
\]
and when all of these generating functions have been determined, we will simply sum them in order to find the generating function for all such functions. That is,

\[
G_{k,p}=1+px+\sum_{a,b\in [p]} F(a,b)
\]

We will now give a method of determining $F(a,b)$ given some fixed $p$ and $k$ using the Transfer Matrix method, which is described in  \cite{Stanley11}, Section 4.7.  For a description and examples of the transfer matrix method as used for enumerating permutations, see \cite{Kitaev11}, \cite{Mansour01}, and \cite{Vatter06}.

These functions are in bijection with sequences $a_1,...,a_n$ where $a_i=f(i)$, which we will discuss below.  
%gotta fix this at some point...
To begin, we claim that for $n\ge 3$, we have:
\begin{equation}\label{eq:recurrence}
f_n(a,b)=\sum_{i\le k+2b-a} f_{n-1}(b,i)
\end{equation}
Given $a_1,...,a_n$, we know that $a_2,...,a_n$ is unique and counted by $f_{n-1}(a_2,a_3)$.  If $a_1,...,a_{n-1}$ is a sequence such that $a_1=b$, then we see that this sequence has a unique extension to a sequence beginning with $a$ if and only if $a+a_2-2a_1\le k$, or equivalently, $a_2\le k+2a_1-a$. Then, summing over all possible $a_2$, we find the relation above. 

We now claim that
\[
F(a,b)=x\left(\sum_{i\le k+2b-a} F(b,i)\right)+x^2.
\]
To see this, we multiply by $x^n$ and sum over \eqref{eq:recurrence} for $n\ge 3$. We have
\[
\sum_{n\ge 3} f_n(a,b)x^n=\sum_{n\ge 3}\sum_{i\le k+2b-a} f_{n-1}(b,i)x^n.
\]
Reversing the order of summation on the right and changing the indexing, we find:
\[
\sum_{n\ge 3}f_n(a,b)x^n=\sum_{i\le k+2b-a}x\sum_{n\ge 3}f_{n}(b,i)x^n.
\]
Now note that $[x^2]F(a,b)=1$ so we can add $x^2$ to both sides and conclude \eqref{eq:recurrence}. Furthermore, note that we have $p^2$ such in $p^2$ unknowns. To show that it is possible to find a generating function, it will suffice to check that such equations have a unique solution. 

Note that we can rearrange the above equation as follows:
\begin{equation}\label{eq:reqrer}
x^2=F(a,b)-x\left(\sum_{i\le k+2b-a} F(b,i)\right).
\end{equation}

For ease of notation, we order the $F(a,b)$ lexicographically, i.e. $F(a,b)<F(c,d)$ if $a<c$ or $a=c$ and $b<d$. Given this order, let $F_i$ be the $i$th ordered element. 

Let $A_{ij}$ be the matrix described by the $p^2$ equations above, ordered using the aforementioned linear order. Note that the diagonal entries of $A$ are either $1$ or $1-x$ and that the off diagonal entries are either $0$ or are divisible by $x$. We wish to show that the coefficient of $x^0$ in the determinant of $A$ is $1$. Let $q$ be the number of diagonal entries of $A$ equal to $1-x$. Then,

\begin{align*}
\det(A)&=\sum_{\sigma\in S_n}\sgn(\sigma)a_{1,\sigma(1)}...a_{2,\sigma(2)}\\
&=\sgn(e)a_{1,1}...a_{n,n}+\sum_{\sigma\in S_n-\{e\}}\sgn(\sigma)a_{1,\sigma(1)}...a+{2,\sigma(2)}\\
&=(1-x)^q+\sum_{\sigma\in S_n-\{e\}}\sgn(\sigma)a_{1,\sigma(1)}...a+{2,\sigma(2)}\\
&=1+\sum_{n\ge1} -x^n{q\choose n}+\sum_{\sigma\in S_n-\{e\}}\sgn(\sigma)a_{1,\sigma(1)}...a+{2,\sigma(2)}\\
\end{align*}
By our previous observations, note that $x$ divides both of the summations above, so $\det(A)$ has the form $1+xy$ for some $y\in \R[[x]]$. Note that the leading term of $x$ is $1$, and thus it is a unit in $\R[[x]]$. Since a matrix with elements in a commutative ring with identity, $R$, is invertible if and only if its determinant is a unit in $R$, and so $A$ is invertible in $\R[[x]]$ as $\det(A)$ is a unit.  This means the above system of equations has a unique solution. To arrive at the correct generating function, one must then add back in permutations of length $1$ and $0$ that do not fit into the scheme above.

As an example we can calculate that 
\begin{align*}
G_{0,3} &= 1+3x+9x^2+16x^3+20x^4 + 21x^5/(1-x) \\
&= 1+3x+9x^2+16x^3+20x^4 + 21x^5 + 21x^6 + 21x^7 + \ldots
\end{align*}

Notice that the number of such permutations as $n$ becomes large stays constant at the value of $21$.  Perhaps somewhat surprisingly, this is true for all values of $p$ when $k = 0$; this is our next result.

\begin{thm}
Let $g_{0, p}(n) = [x^n]G_{0, p} = \#\{f: [n] \to [p] \, | \, f(i-1) + f(i+1) - 2f(i) \leq 0 \, \forall \, i \in \{2, 3, \ldots, n-1\}\}$, then for $n > 2(p-1)$,
\[
g_{0, p}(n) = \sum_{m=1}^p \left(\sum_{j=0}^{m-1} a(j)\right)^2
\]
where $a(j)$ is the number of integer partitions of $j$.
\end{thm}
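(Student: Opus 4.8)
The plan is to make precise the bijection with pairs of integer partitions promised in the introduction. First I would rewrite the $0$-convexity condition \eqref{eq:convex} (with $k=0$) as the statement that the sequence of first differences $d_i := f(i+1) - f(i)$, for $i \in \{1, \ldots, n-1\}$, is weakly non-increasing: $d_1 \geq d_2 \geq \cdots \geq d_{n-1}$. Such an $f$ is concave, so its maximum is attained at a single peak value $M := \max_i f(i)$ and its minimum is attained at one of the two endpoints $f(1), f(n)$. Consequently, to guarantee that $f$ maps into $[p]$ it suffices to require $M \leq p$ together with $f(1) \geq 1$ and $f(n) \geq 1$, a reduction to three inequalities that will streamline the count.

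Next I would decompose the difference sequence according to sign. Because the $d_i$ are non-increasing, the positive differences occur first, the zero differences in the middle, and the negative differences last. Reading the positive differences in decreasing order yields a partition $\lambda$ whose size $R = |\lambda|$ equals the total rise $M - f(1)$; reading the absolute values of the negative differences from the end inward yields a partition $\mu$ whose size $S = |\mu|$ equals the total fall $M - f(n)$. Writing $\ell(\cdot)$ for the number of parts and letting $z \geq 0$ be the number of zero differences, we have $\ell(\lambda) + z + \ell(\mu) = n - 1$. The data $(M, \lambda, \mu)$ together with $n$ reconstructs $f$ uniquely: lay down $\ell(\lambda)$ positive steps, then $z = n - 1 - \ell(\lambda) - \ell(\mu)$ flat steps, then $\ell(\mu)$ negative steps, starting from $f(1) = M - R$. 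The endpoint and peak constraints translate into exactly $1 \leq M \leq p$, $R \leq M - 1$, and $S \leq M - 1$ (since $f(1) = M - R \geq 1$ and $f(n) = M - S \geq 1$), and once $M$ is fixed the partitions $\lambda$ and $\mu$ range over all partitions of their respective admissible sizes independently of one another.

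The role of the hypothesis $n > 2(p-1)$ is to ensure the reconstruction always succeeds, i.e. that $z = n - 1 - \ell(\lambda) - \ell(\mu) \geq 0$ for every admissible triple. Since each part of $\lambda$ and of $\mu$ is at least $1$, we have $\ell(\lambda) \leq R \leq M - 1 \leq p-1$ and likewise $\ell(\mu) \leq p - 1$, so $\ell(\lambda) + \ell(\mu) \leq 2(p-1) \leq n-1$. Hence for $n > 2(p-1)$ the map $f \mapsto (M, \lambda, \mu)$ is a bijection onto the set of triples obeying $1 \leq M \leq p$, $|\lambda| \leq M-1$, and $|\mu| \leq M-1$; in particular the count becomes independent of $n$. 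Finally I would count: for fixed $M$ the number of choices for $\lambda$ is $\sum_{j=0}^{M-1} a(j)$ and, independently, the number of choices for $\mu$ is the same, giving $\left(\sum_{j=0}^{M-1} a(j)\right)^{2}$; summing over $M = 1, \ldots, p$ yields the claimed formula.

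I expect the main obstacle to be the bijection bookkeeping rather than any deep idea: one must verify that the sign-decomposition of a non-increasing integer sequence is unique, that the number of flat steps $z$ is genuinely forced by $n$ (so that no spurious multiplicity is introduced), and that concavity really does push the extrema to the peak and endpoints so that checking three inequalities suffices to keep $f$ inside $[p]$. The threshold $n > 2(p-1)$ must then be shown to be precisely the point at which every triple is realizable, which is where the bound $\ell(\lambda) + \ell(\mu) \leq 2(p-1)$ enters.
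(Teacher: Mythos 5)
Your proposal is correct and takes essentially the same approach as the paper: both arguments stratify the $0$-convex words by their maximum value $m$ and biject them with pairs of integer partitions of sizes at most $m-1$ --- the paper via the strictly increasing prefix, $m$-plateau, and strictly decreasing suffix, and you, equivalently, via the weakly decreasing first-difference sequence recorded as $(M,\lambda,\mu)$ with $z$ flat steps. Your explicit bookkeeping $z = n-1-\ell(\lambda)-\ell(\mu)\geq 0$ makes the role of the threshold $n > 2(p-1)$ a bit more transparent, but the underlying bijection is the same.
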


\begin{proof}
We rewrite this sum as 
\[
g_{0, p}(n) = \sum_{m=1}^p h(m)
\]
and see that $h(m)$ can be interpreted as the cardinality of $H_m$, which we define as the set of 0-convex words of a fixed length at least $2p-1$ on the alphabet $[p]$ that attain a maximal value of $m$.  Now given a word $w \in H_m$, we will show that every element in $H_m$ is determined uniquely by an ordered pair $\{w_1, w_2\}$ of integer partitions of $m-w_f$ and $m-w_{\ell}$ for $w_f$ and $w_{\ell}$ the first and last entries of $w$, respectively, with possible values between 1 and $m$, which would mean that $h(m) = (\sum_{j=0}^{m-1} a(j))^2$.

Fix $m \leq p$ and let $w_1, w_2$ be two integer partitions of $m_1, m_2 < m$, respectively.  Write $w_1$ as a sequence with elements corresponding to the parts of the integer partitions written in increasing order.  Now construct the word $pf$, where for all $i = 1, 2, \ldots, length(w_1)$, 
\[
pf(i) = m - \left(\sum_{j = i+1}^{length(w_1)} w_1(j)\right).
\]
The result is a 0-convex word of length at most $m_1$ whose first entry is $m - m_1 \geq 1$; it is strongly increasing and 0-convex (the second difference condition is satisfied because the sequence of first differences is weakly decreasing while the entries themselves are increasing, by construction).  Similarly write $w_2$, the sequence of partial sums of the partition of $m_2$, also in decreasing order, and form the word $sf$ where 
\[
sf(i) = m - \left(\sum_{j = 1}^{i} w_2(j)\right).
\]
Then $sf$ is 0-convex and strongly decreasing, with final element $m - m_2$.  For $c \geq 0$, form a 0-convex word of length $2p-1+c$ that attains a maximum value of $m$ by appending $2p - 1 + c - {length}(pf) - {length}(sf) \geq 2p-1+c-m_1-m_2\geq c+1$ copies of $m$ to $pf$ and then further appending $sf$ to the result.  To show that the concatenation of these three 0-convex sequences is also 0-convex, we must check the boundaries between the subwords.  Clearly $2m \geq m + pf(length(w_1))$ and $2m \geq m + sf(1)$; notice also that by the construction of the words $pf$ and $sf$, the difference between the final two entries of $pf$ is less than the difference between $m$ and the final entry of $pf$, and because this sequence is increasing the second difference condition is satisfied.  A similar argument holds for the transition between the $m$ plateau and $sf$, so we conclude that this construction maps the pair $\{w_1, w_2\}$ into $(pf)m\ldots m (sf)$, a 0-convex word in $H_m$.

Next, fix $c \geq 0$ and let $w$ be a word of length $2p-1+c$ that attains a maximum value of $m$ ($w \in H_m$).  We know due to the convexity condition that $w$ may be written as some subset of a strictly increasing subword followed by a plateau consisting of the value $m$ and finally by a strictly decreasing subword; this ensures that all appearances of $m$ in $w$ are consecutive.  Further, we know that $m$ appears at least once.  We can then write $w = (pf)M(sf)$ where $pf$ and $sf$ are 0-convex, $pf$ is strongly increasing, $sf$ is strongly decreasing, and $M$ is a string of $m$'s of length $2p-1+c - length(pf) - length(sf) \leq c+1 < 0$ (hence $w$ attains its maximum).  Then $pf$ encodes a unique integer partition $w_1$ of $m - pf(1)$; the partition is given by the first differences of the entries in $pf$ and $m - pf(length(w_1))$ (the fact that this sequence is monotone decreasing proves uniqueness).  As a similar argument can be made for $sf$ into the partition $w_2$ of $m - sf(length(w_2))$, we can now map the words $w = (pf)M(sf)$ into the pairs $\{w_1, w_2\}$ where $w \in H_m$.  Thus, $h(m) = |H_m| = |\text{partitions of }0, 1, 2, \ldots, m-1|^2 = (\sum_{j=0}^{m-1} a(j))^2$ and the result follows.
\end{proof}

\begin{exmp} 
Let $p = 3, w_1 = \{1\}$ (a partition of 1), $w_2 = \{1, 1\}$ (a partition of 2), then $pf = 2, sf = 21, w = 23321$. \\

Let $p = 8, w_1 = \{1, 1, 2, 3\}$ (a partition of 7), $w_2 = \{2, 4\}$ (a partition of 6), then
$pf = 1467, sf = 62, w = 146788888888862$. \\

Let $p = 5, w_1 = \{1, 1, 1, 1\}$ (a partition of 4), $w_2 = \{3\}$ (a partition of 3), then $pf = 1234, sf = 2, w = 123455552$.
\end{exmp}

The sequence $\{g_{0, p}(2p-1)\}_p$ begins $1, 5, 21, 70, 214, 575, 1475, 3500, 7469, \ldots$

\section{Locally Convex Permutations}
\begin{defn}Let $\pi$ be a permutation of length $n$ $(\pi: [n] \to [n]$ is a bijection).  We say that $\pi$ is \emph{$k$-convex} for a nonnegative integer $k$ if it obeys the following: \\  \\ \centerline{$\pi(i-1) +\pi(i+1) - 2\pi(i) \leq k$ for all $i \in \{2, 3, \ldots, n-1\}$.}  \\

If the statement is true for $k = 0$, we call $\pi$ \emph{perfectly convex}, and if the statement is true for $k = 2$ (and, as a result, for $k = 1$ and $k = 0$), we say that $\pi$ is \emph{strongly convex}.
\end{defn}

Throughout, we will write $\pi$ in one line notation as $\pi(1)\pi(2)\pi(3)\ldots\pi(n)$.  We are interested in enumerating the $k$-convex permutations of length $n$, which we define as $f_k(n)$.  First we will establish a few elementary properties of permutation convexity. Techniques similar to ours for enumeration and estimating growth rates may be found in \cite{Albert06} and \cite{Branden05}.

\begin{prop}
If $\pi$ is a $k$-convex permutation of length $n$, then $\pi^{R}$, the reverse of $\pi$ in $S_n$, is also $k$-convex.  In particular, for all $n > 1$ and $k$, $f_k(n)$.
\end{prop}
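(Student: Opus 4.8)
The plan is to show directly that reversal preserves the second-difference condition via a single index substitution, and then to read off the parity statement (interpreting the truncated ``in particular'' clause as the assertion that $f_k(n)$ is even) by observing that reversal is a fixed-point-free involution on the set of $k$-convex permutations of length $n > 1$.

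First I would write $\pi^R$ explicitly in one-line notation as $\pi^R(i) = \pi(n+1-i)$ for $i \in [n]$; this is again a bijection of $[n]$, hence a genuine permutation. The key step is to verify the convexity inequality for $\pi^R$ at an arbitrary interior index $i \in \{2, \ldots, n-1\}$. Setting $j = n+1-i$ and substituting, I would compute $\pi^R(i-1) = \pi(j+1)$, $\pi^R(i+1) = \pi(j-1)$, and $\pi^R(i) = \pi(j)$, so that
\[
\pi^R(i-1) + \pi^R(i+1) - 2\pi^R(i) = \pi(j-1) + \pi(j+1) - 2\pi(j).
\]
As $i$ ranges over $\{2, \ldots, n-1\}$, the index $j = n+1-i$ ranges over exactly the same set, so the entire family of inequalities defining $k$-convexity of $\pi^R$ is a relabeling of the family defining $k$-convexity of $\pi$. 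Thus $k$-convexity of $\pi$ yields $k$-convexity of $\pi^R$ with nothing more than this substitution.

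For the ``in particular'' clause I would argue that reversal $\pi \mapsto \pi^R$ is an involution on the set of $k$-convex permutations of length $n$: by the first part it maps this set into itself, and $(\pi^R)^R = \pi$. Moreover it has no fixed points when $n > 1$, since $\pi = \pi^R$ would force $\pi(1) = \pi(n)$, impossible for a bijection because $1 \neq n$. A fixed-point-free involution partitions its domain into two-element orbits, so $f_k(n)$ is even for all $n > 1$ and all $k$.

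There is essentially no obstacle here: the preservation of convexity is a purely mechanical reindexing, and the only point requiring a moment of care is confirming that the endpoints of the interior index range $\{2, \ldots, n-1\}$ map to themselves under $i \mapsto n+1-i$, so that no boundary inequality is lost or gained. The fixed-point-free check is the only genuinely new observation, and it is immediate from injectivity.
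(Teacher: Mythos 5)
Your proof of the main claim is essentially the paper's: both rest on the single reindexing $\pi^R(i) = \pi(n+1-i)$ together with the observation that $i \mapsto n+1-i$ maps the interior index set $\{2,\ldots,n-1\}$ onto itself, so the family of inequalities for $\pi^R$ is a relabeling of the family for $\pi$. Your write-up is in fact cleaner than the paper's: the paper's displayed implication contains a sign typo (it writes $+\,2\pi^R(n-i+1)$ where $-\,2\pi^R(n-i+1)$ is meant), and it leaves implicit the symmetry of the second difference under swapping $j-1$ and $j+1$, which you make explicit by computing $\pi^R(i-1)=\pi(j+1)$ and $\pi^R(i+1)=\pi(j-1)$. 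Where you genuinely go beyond the paper is the ``in particular'' clause: the statement as printed is truncated (it presumably should end ``$f_k(n)$ is even''), and the paper's proof says nothing about it, whereas you supply a complete argument --- by the first part, reversal is an involution on the set of $k$-convex permutations of length $n$, and it is fixed-point-free for $n>1$ since $\pi = \pi^R$ would force $\pi(1) = \pi(n)$, impossible for an injection when $1 \neq n$; hence the set is partitioned into two-element orbits and $f_k(n)$ is even. That is the natural reading of the truncated clause, and your argument for it is correct and fills a genuine gap in the paper's proof.
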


\begin{proof}
This is true simply because $\pi(i) = \pi^{R}(n+1-i)$ for $i \in [n]$, and so $\pi(i-1) + \pi(i+1) - 2\pi(i) \leq k \implies \pi^R(n-i+2) + \pi^R(n-i) + 2\pi^R(n-i+1) \leq k$ for $i \in [2, n-1]$.  
\end{proof}

\begin{prop}
Strongly convex permutations (and therefore perfectly convex permutations) avoid consecutive entries order-isomorphic to 213 or 312.
\end{prop}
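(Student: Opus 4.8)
The plan is to exploit the fact that both of the forbidden patterns, $213$ and $312$, place the smallest of the three entries in the \emph{middle} position. Concretely, if three consecutive entries $\pi(i-1), \pi(i), \pi(i+1)$ are order-isomorphic to $213$ or to $312$, then in both cases the central entry $\pi(i)$ is strictly smaller than each of its two neighbors, so $i$ is a strict interior local minimum of $\pi$. (The six patterns of length three split into those with the middle entry a local maximum, namely $132$ and $231$, those that are monotone, namely $123$ and $321$, and exactly the two with the middle entry a strict local minimum, namely $213$ and $312$.) So the first step is simply to record this characterization, which reduces the claim to showing that a strongly convex permutation has no interior strict local minimum.

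For the second step I would argue by contradiction, using that $\pi$ takes distinct integer values. Suppose $\pi$ contains such a pattern at positions $i-1, i, i+1$, so that $\pi(i) < \pi(i-1)$ and $\pi(i) < \pi(i+1)$. Since $\pi(i-1)$ and $\pi(i+1)$ are distinct integers both exceeding $\pi(i)$, the smaller of the two is at least $\pi(i) + 1$ and the larger is at least $\pi(i) + 2$. Adding these bounds gives
\[
\pi(i-1) + \pi(i+1) - 2\pi(i) \geq (\pi(i)+1) + (\pi(i)+2) - 2\pi(i) = 3 > 2,
\]
which contradicts the defining inequality of $2$-convexity (strong convexity) at $i$. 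Hence no such pattern can occur, proving the proposition for strongly convex permutations; the statement for perfectly convex permutations is then immediate, since every perfectly convex ($0$-convex) permutation is in particular strongly convex.

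There is essentially no hard step here: the entire argument is a single integer inequality, and the main thing to get right is the pattern bookkeeping in the first step, that is, verifying that $213$ and $312$ are exactly the length-three patterns whose central value is a strict local minimum, so that the convexity estimate applies to them and only to them. I would also remark, although it is not needed for the statement, that the threshold $k=2$ is sharp for this purpose: a strict interior local minimum forces the second difference to be at least $3$, so convexity with parameter $k \geq 3$ would no longer rule these patterns out, which is why strong convexity is precisely the hypothesis under which the avoidance holds.
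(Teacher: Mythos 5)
Your proof is correct and takes essentially the same route as the paper: identify that a consecutive $213$ or $312$ forces an interior strict local minimum, then use integrality of the two distinct larger neighbors to get $\pi(i-1) + \pi(i+1) - 2\pi(i) \geq 1 + 2 = 3 > 2$, contradicting $2$-convexity. In fact your write-up states the inequality with the correct direction, whereas the paper's version contains a sign typo (``$\leq 2 + 1 = 3$'' where $\geq 3 > 2$ is meant), and your closing remark that $k = 2$ is the sharp threshold is a nice observation consistent with the paper's terminology of ``strongly convex.''
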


\begin{proof}
Suppose without loss of generality that, for some $i$, $\pi(i) < \pi(i-1) < \pi(i+1)$ (ie. there exists a consecutive 213), then $\pi(i-1) + \pi(i+1) - 2\pi(i) = (\pi(i+1) - \pi(i)) - (\pi(i) - \pi(i-1)) \leq 2 + 1 = 3$.
\end{proof}

In particular, this gives 0-, 1-, and 2-convex permutations some nice structure.  Notice that another way of phrasing this result is that strongly and perfectly convex permutations contain only substrings whose minima are located at either the beginning or the end of the substring.  Further, we can say that 213 and 312 avoiding permutations consist of an increasing substring followed by a decreasing substring, giving strongly convex permutations a ``mountain'' shape.  Ignoring the convexity condition, it is clear that these patterns are in bijection with 2-colorings of $[n-1]$.  To construct the bijection, color each integer in $[n-1]$ red or blue, then begin with the increasing substring of red integers.  Append $n$, followed by the decreasing substring of blue integers to form a unique permutation. 

\begin{exmp}

\textcolor{blue}{12}\textcolor{red}{3}\textcolor{blue}{45}\textcolor{red}{67}\textcolor{blue}{8} $\to$ \textcolor{red}{367}9\textcolor{blue}{85421}

\textcolor{red}{123}\textcolor{blue}{4}\textcolor{red}{56}\textcolor{blue}{78} $\to$ \textcolor{red}{12356}9\textcolor{blue}{874}
\end{exmp}

This construction ignores convexity, yet all strongly convex permutations may be constructed uniquely in this fashion. Thus we have our first upper bound on the number of strongly convex permutations:

\begin{lem}
For $k = 1, 2, $ \\ \\ \centerline{${f_k(n) < 2^{n-1}.}$}
\end{lem}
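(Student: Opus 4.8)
The plan is to leverage the structural characterization of strongly convex permutations established in Proposition~2.4 (the avoidance of consecutive $213$ and $312$ patterns) together with the $2$-coloring bijection described immediately afterward. The key observation is that for $k = 1, 2$, every $k$-convex permutation is strongly convex, and hence consists of an increasing substring followed by a decreasing substring with $n$ at the peak. This gives an \emph{injection} from the set of $k$-convex permutations of length $n$ into the set of $2$-colorings of $[n-1]$: each such permutation is recovered by coloring the elements of $[n-1]$ red if they lie in the ascending run and blue if they lie in the descending run, then reading off the red elements in increasing order, appending $n$, and reading off the blue elements in decreasing order.

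First I would note that the map from $2$-colorings to mountain-shaped permutations is a well-defined bijection onto the set of \emph{all} $213$- and $312$-avoiding permutations, of which there are exactly $2^{n-1}$. Every $k$-convex permutation ($k=1,2$) is mountain-shaped by Proposition~2.4, so it arises from some coloring; conversely, distinct $k$-convex permutations arise from distinct colorings since the coloring is read directly off the permutation's shape. This establishes the weak bound $f_k(n) \le 2^{n-1}$.

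Second, I would upgrade the inequality to a strict one. The point is that the $2$-coloring construction ignores the convexity condition entirely, so not every mountain-shaped permutation is actually $k$-convex for $k = 1$ or $k = 2$. To obtain strictness it suffices to exhibit, for each $n$ large enough, at least one $2$-coloring whose associated mountain permutation \emph{fails} the second-difference condition. For instance, one can choose a coloring that forces two consecutive large jumps on the ascending run---say placing a large gap between $\pi(i-1)$ and $\pi(i)$ followed by an even larger gap between $\pi(i)$ and $\pi(i+1)$---so that $\pi(i+1) - \pi(i) > \pi(i) - \pi(i-1) + k$, violating \eqref{eq:convex}. Since such a configuration is realizable for all $n \ge 3$, the injection is not surjective, and the bound is strict.

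The main obstacle I anticipate is purely bookkeeping in the strictness step: one must verify that the violating coloring genuinely produces a non-$k$-convex permutation for the specific small values $k = 1, 2$, rather than merely for large $k$, and confirm the edge cases at small $n$. The weak bound itself is essentially immediate from Proposition~2.4, so the real content is guaranteeing at least one ``bad'' mountain permutation exists, which amounts to a single explicit construction rather than a delicate estimate.
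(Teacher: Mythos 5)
Your approach is in substance identical to the paper's: the lemma there has no separate proof beyond the surrounding discussion, namely the consecutive-$213$/$312$ avoidance giving the mountain shape, the bijection between mountain-shaped permutations and $2$-colorings of $[n-1]$ (of which there are $2^{n-1}$), and the parenthetical remark that not all ascending--descending permutations are $2$-convex. Your injection argument for the weak bound $f_k(n) \le 2^{n-1}$ is a careful rewriting of exactly that reasoning, and your strictness step makes explicit what the paper dispatches in one parenthetical sentence.

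The one concrete flaw is your claim that a violating mountain permutation ``is realizable for all $n \ge 3$.'' This is false for small $n$: at $n = 3$ the four mountain permutations $123$, $132$, $231$, $321$ have second differences $0$, $-3$, $-3$, $0$, so all are $1$-convex (and $2$-convex), and the paper's own table confirms equality rather than strict inequality there: $f_1(3) = f_2(3) = 4 = 2^2$, $f_1(4) = f_2(4) = 8 = 2^3$, and $f_2(5) = 16 = 2^4$. The obstruction is room: to force $\pi(i+1) - \pi(i) > \pi(i) - \pi(i-1) + k$ inside an ascending run capped at $n$, the cheapest configuration uses gaps $1$ and $k+2$, requiring values at least $1, 2, k+5$, hence $n \ge k+5$ when anchored minimally. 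The smallest violators are $12543$ for $k = 1$ (second difference $1 + 5 - 2\cdot 2 = 2 > 1$), available only from $n = 5$, and $126543$ for $k = 2$ (second difference $1 + 6 - 2\cdot 2 = 3 > 2$), available only from $n = 6$; a symmetric analysis rules out violations on the descending side for smaller $n$. So your construction proves strictness only for $n \ge 5$ when $k = 1$ and $n \ge 6$ when $k = 2$. To be fair, the lemma as literally stated suffers the same defect --- it fails for $n \le 4$ (and for $n = 5$ when $k = 2$), as the paper's table shows --- so your writeup matches the paper's level of rigor; but a correct version should carry the qualifier that $n$ be sufficiently large (explicitly $n \ge 5$, resp.\ $n \ge 6$) and should exhibit the bad mountain permutations as above rather than asserting their existence from $n = 3$.
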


(This is a strict inequality as clearly not all ascending-descending permutations are $2$-convex.)  There are other ways to obtain this result.  One requires use of the fact that a strongly convex permutation of length $n$ inherits a strongly convex substring consisting of $[n-1]$.  In other words, if a permutation of length $n-1$ is not strongly convex, there is no way to \enquote{fix} the permutation by inserting $n$.  Let $\pi:[n-1] \to [n-1]$ be a permutation that is not strongly convex, so there is some $i$ for which $\pi(i-1) + \pi(i+1) - 2\pi(i) > 2$.  If one were to attempt to ``fix'' this permutation by extending $\pi$ to $\pi'$, a permutation of length $n$, it is clear that one must insert $n$ next to $\pi(i)$ in the one line notation to fix the convexity condition.  This will not work, however, as replacing $\pi(i-1)$ or $\pi(i+1)$ with $n$ will only worsen the convexity at this point. 

Now, given a strongly convex permutation $\pi(1)\pi(2)\pi(3)\ldots\pi(n)$, how might one build a permutation of length $n+1$?  The ascending-descending nature of the permutation dictates that if $\pi(i) = n,$ it must be that either $\pi(i-1) = n-1$ or $\pi(i+1) = n-1$.  For the same reason, it must be that the only possible positions in which we can place $n+1$ to extend $\pi$ to $\pi'$ and preserve strong convexity are immediately before and immediately after $n$.  Actually, one of these placements is always available; $\pi$ may always be extended to length $n+1$ by placing $n+1$ between $n$ and $n-1$.  Doing this for each permutation, we attain the inequality 
\[
1 \leq f_k(n+1)/f_k(n) \leq 2 \text{ for } k = 1, 2. 
\]
This method of tracking growth preserves substrings by ``building in the middle'' of the permutation.  A similar thing may be done by ``building from the outside.''  We will discuss this method later. Note that the classes of  $1$-convex and $2$-convex permutations are smaller than $Av(213,312)$, which itself is enumerated by $2^{n-1}$, as is known from the seminal paper of Simion and Schmidt \cite{Simion85}.

We now give another result that will prove useful in enumerating strongly convex and perfectly convex permutations.

\begin{prop}
If $\pi$ is $k$-convex with $k < 3$, exactly one of the following must be true: 
\begin{itemize}
\item $\pi(1) = 1$ and $\pi(2) = 2$, or
\item $\pi(n) = 1$ and $\pi(n-1) = 2$, or
\item $\pi(1) = 1$ and $\pi(2) = 3$, or
\item $\pi(n) = 1$ and $\pi(n-1) = 3$, or
\item $\pi(1) = 2$ and $\pi(2) = 3$, or
\item $\pi(n) = 2$ and $\pi(n-1) = 3$.
\end{itemize}
\end{prop}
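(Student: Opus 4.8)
The plan is to reduce everything to the ``mountain'' structure already established and then simply locate the three smallest values. First I would invoke the previous Proposition: since $k < 3$, the permutation $\pi$ avoids consecutive patterns order-isomorphic to $213$ and $312$, so it has no interior local minimum and is therefore unimodal---it strictly increases from $\pi(1)$ up to its peak, which carries the value $n$, and then strictly decreases down to $\pi(n)$. In particular the global minimum, the value $1$, cannot sit in the interior, so either $\pi(1) = 1$ or $\pi(n) = 1$, and for $n \geq 2$ exactly one of these holds.

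Next I would use the reversal symmetry (the earlier Proposition that $\pi^{R}$ is $k$-convex) to assume without loss of generality that $\pi(1) = 1$; the six listed conditions are interchanged in pairs by reversal (the first with the second, the third with the fourth, the fifth with the sixth), so the case $\pi(n) = 1$ follows by applying the argument to $\pi^{R}$. Assuming $n \geq 4$ guarantees that the peak value $n$ is strictly larger than $3$, so that the values $1, 2, 3$ all lie strictly below the peak and hence each belongs to the increasing part or the decreasing part but not to the peak itself.

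The heart of the argument is then a short case analysis on the location of the value $2$. Because each part is strictly monotone and the value $1$ occupies the left end, the value $2$ must be either the second entry of the increasing part or the final entry of the decreasing part; that is, $\pi(2) = 2$ or $\pi(n) = 2$. If $\pi(2) = 2$ we are in the first listed case. If instead $\pi(n) = 2$, I would locate the value $3$ by the same reasoning: it is forced to be either $\pi(2)$ (giving $\pi(1) = 1$, $\pi(2) = 3$, the third case) or $\pi(n-1)$ (giving $\pi(n) = 2$, $\pi(n-1) = 3$, the sixth case), according to whether $3$ lies in the increasing or the decreasing part. The key small lemma to verify carefully here is that, since the entries strictly increase away from the shared peak and since the value $1$ already occupies one end, a value entering a part must sit in the position immediately adjacent to the values smaller than it---this is exactly what pins $2$ and $3$ to positions $2$, $n$, or $n-1$ rather than anywhere deeper in a part.

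Finally I would check that the six conditions are mutually exclusive, so that ``exactly one'' is justified: the three conditions arising from $\pi(1) = 1$ are separated by whether $\pi(2) = 2$, $\pi(2) = 3$, or $\pi(2) \geq 4$, the three from $\pi(n) = 1$ by the analogous trichotomy for $\pi(n-1)$, and the two groups cannot overlap because the value $1$ occupies only one end. The main obstacle is not any single deduction but rather the bookkeeping at the boundaries: for small $n$ (notably $n = 3$, where permutations such as $132$ and $231$ each satisfy two of the listed conditions because the peak coincides with the value $3$) the clean separation degenerates, so I would either state the result for $n \geq 4$ or dispose of $n \leq 3$ by direct inspection.
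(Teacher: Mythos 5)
Your proposal is correct and takes essentially the same route as the paper's proof: consecutive $213$/$312$ avoidance gives the mountain shape, which forces the value $1$ to an endpoint, reversal symmetry reduces to the case $\pi(1)=1$, and the values $2$ and $3$ are then pinned to positions $2$, $n$, and $n-1$ exactly as in the paper's case split on $\pi(2)$. Your additional attention to mutual exclusivity and to the degenerate small cases (for $n=3$, e.g., $132$ genuinely satisfies two of the listed conditions, so the ``exactly one'' claim needs $n\geq 4$ or direct inspection) is a point of rigor the paper's proof passes over silently.
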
 

\begin{proof}
Since these patterns are 213 and 312 avoiding, we see that either $\pi(1) = 1$ or $\pi(n) = 1$.  We assume now that $\pi(1) = 1$ and recall that since $\pi^{R}$ is also $k$-convex, it remains only to show that $\pi(2) = 2$ or $\pi(2) = 3$ or both $\pi(n) = 2$ and $\pi(n-1) = 3$.  Suppose $\pi(2) > 3$, then since $\pi$ is 213 and 312 avoiding (mountain-shaped, as described earlier), we know the decreasing sequence at the end of the permutation ends with the substring 32.  This proves the proposition. 
\end{proof}

With these facts in mind, enumerating perfectly convex permutations is rather straightforward.

\begin{thm}
For all $n \geq 1$,

\[
f_0(n) = \begin{cases} 1 &\mbox{\emph{if} } n = 1 \\
2 &\mbox{\emph{if} } n = 2 \\
4 &\mbox{\emph{if} } n = 3 \\
6 &\mbox{\emph{if} } n = 4 \\
8 &\mbox{\emph{if} } n \geq 5 \end{cases} 
\]
\end{thm}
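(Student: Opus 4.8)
The plan is to exploit the mountain structure forced by the $213/312$-avoidance proposition together with the reversal symmetry, reducing the entire count to an enumeration of convex ``ascents'' that is governed by a single integer parameter. I would first dispose of the base cases $n \in \{1,2,3,4\}$ by direct inspection (each is a finite check, giving $1,2,4,6$), since, as I will note at the end, several of the structural families coincide for small $n$ and this is exactly what makes the small values smaller. The substance of the argument is then the claim that $f_0(n) = 8$ for every $n \ge 5$.

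For $n \ge 5$, recall that a perfectly convex $\pi$ is mountain-shaped with its minimum value $1$ sitting at one of the two ends, and that reversal sends perfectly convex permutations to perfectly convex permutations. Since $\pi(1) \ne \pi(n)$ for $n \ge 2$, reversal is a fixed-point-free involution, and within each reversal pair exactly one member has $\pi(1) = 1$. Pairing $\pi$ with $\pi^R$ therefore gives $f_0(n) = 2\,N(n)$, where $N(n)$ counts perfectly convex permutations with $\pi(1) = 1$; it remains to show $N(n) = 4$. Throughout I would use that $0$-convexity says the first differences $d_i = \pi(i+1)-\pi(i)$ are weakly decreasing, and that each $d_i \ne 0$ because $\pi$ is a permutation, so the positive differences (the ascent) all precede the negative ones (the descent). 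Assuming $\pi(1)=1$, the six-case endpoint proposition leaves exactly three possibilities: $\pi(2)=2$, or $\pi(2)=3$, or $\pi(n)=2$ with $\pi(n-1)=3$. If $\pi(2)=2$ then $d_1 = 1$, so every ascent difference equals $1$ and $\pi$ is the identity (one permutation). If $\pi(n)=2,\pi(n-1)=3$, then the last difference is $-1$, whence weak monotonicity forces every descent difference to be $-1$; the descent is $n,n-1,\dots,2$, the ascent is just $1,n$, and $\pi = 1\,n\,(n-1)\cdots 2$ (one permutation).

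The remaining sub-case $\pi(2)=3$ is the crux. Here the largest difference is $2$, so the weakly decreasing positive ascent differences all lie in $\{1,2\}$ and form a block $2^{r}1^{s}$ with $r \ge 1$ and $2r+s = n-1$; this $r$ is the only free parameter. The ascent then visits $1,3,\dots,2r+1,2r+2,\dots,n$, skipping precisely the even values $2,4,\dots,2r$, and the permutation condition says these skipped values, together with $n$, must constitute the descent $n,2r,2r-2,\dots,2$. The key computation is to impose convexity on this descent: its gaps are $n-2r,2,2,\dots,2$, so the descent is $0$-convex if and only if $n-2r \le 2$ (this is automatic when $r=1$, where the descent is the single step $n \to 2$). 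I would then count admissible $r$: the value $r=1$ always works, while $r \ge 2$ works iff $(n-2)/2 \le r \le (n-1)/2$, an interval of length $\tfrac12$ containing exactly one integer $r_0$, with $r_0 \ge 2$ precisely when $n \ge 5$. Hence this sub-case contributes exactly two permutations for $n \ge 5$, and summing gives $N(n) = 1+2+1 = 4$, so $f_0(n)=8$.

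The main obstacle is this third sub-case: one must correctly translate the bijectivity constraint into the assertion that the evens skipped by the ascent are exactly the interior values of the descent, and then verify that the descent's own convexity pins down $r$. I also expect to spend care confirming that the three sub-cases, and the two solutions inside the third, are genuinely distinct for $n \ge 5$ --- this holds because they are distinguished by the values of $\pi(2)$ and of $r$ --- whereas for small $n$ they collapse (for instance $132$ simultaneously satisfies several of the descriptions when $n=3$). That collapse is exactly the reason $f_0(n)$ takes the smaller values $1,2,4,6$ for $n \le 4$, which is why those cases are cleanest to settle by the direct enumeration carried out first.
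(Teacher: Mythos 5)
Your proof is correct, and it shares the paper's skeleton: halve via the reversal involution to reduce to $\pi(1)=1$, then split by the endpoint proposition into the three sub-cases $\pi(2)=2$, $\pi(2)=3$, and $\pi(n)=2,\,\pi(n-1)=3$, the first and last of which each force a single permutation (the identity and $1\,n\,(n-1)\cdots 2$). The substantive divergence is the crux sub-case $\pi(2)=3$: the paper proceeds by greedy entry-forcing, noting that convexity at $i=2$ leaves only $\pi(3)\in\{4,5\}$ and that each choice propagates outward to determine the whole permutation (its displayed permutations \eqref{eq:k2} and \eqref{eq:k3}), whereas you recast $0$-convexity as weak monotonicity of the nonzero first-difference sequence, parametrize the ascent as $2^r1^s$ with $2r+s=n-1$, and solve the descent inequality $n-2r\le 2$ to obtain $r\in\{1,r_0\}$. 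The two routes yield the same four permutations, but yours buys transparency of completeness (it is manifest that no mountain with $\pi(2)=3$ has been overlooked, whereas the paper's assertion that ``the only other choice for $\pi(3)$ is $5$'' is left unjustified) and a uniform explanation of the small cases: at $n=4$ the unique integer $r_0$ in $[(n-2)/2,\,(n-1)/2]$ equals $1$, so your two families collapse, which the paper instead records as an observed coincidence of \eqref{eq:k2} and \eqref{eq:k3}; you settle $n\le 4$ by direct inspection where the paper derives $n=4$ from its construction, and both are sound. One sentence worth adding to your write-up: in the $\pi(2)=2$ case, ``every ascent difference equals $1$'' yields the identity only because the mountain's peak must be the maximum value $n$, so the unit-step ascent from $1$ exhausts $[n]$ before any descent can begin --- immediate from the mountain shape you already invoked, but currently implicit.
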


\begin{proof}For $n = 1, 2$, this is trivially true, as the set $\{2, \ldots, n-1\}$ is empty.  For $n = 3$, it is easy to check that the only non-perfectly convex permutations are 213 and 312, so $f_0(3) = 3! - 2 = 4$.  Now let $n \geq 4$; we will construct all perfectly convex permutations.  To begin, assume that $\pi(1) = 1$; this will count exactly half of the desired permutations.  Assume now that $\pi(2) = 2$, then we are forced to have $\pi(3) = 3$ to obey the convexity condition, and similarly that $\pi(4) = 4, \ldots, \pi(n) = n$; this is the identity permutation 

\begin{equation} \label{k1}{1234\ldots n} \end{equation} 
Suppose now that $\pi(2) = 3$, then we know that $\pi(n) = 2$ and we must enumerate the ways to fill the middle of the permutation.  If $\pi(3) = 4$, this forces $\pi(4) = 5, \pi(5) = 6, \ldots, \pi(n-1) = n$, so we have the permutation 

\begin{equation} \label{eq:k2}{1345\ldots n2} \end{equation}

The only other choice for $\pi(3)$, given $\pi(1) = 1, \pi(2) = 3$, is 5, which forces $\pi(n-1) = 4$, and then $\pi(n-2) = 6$, $\pi(4) = 7$\ldots so we have 

\begin{equation} \label{eq:k3}1357\ldots n \ldots 8642 \end{equation} 

This covers the first two of the three cases listed above.  In the third situation, insisting that $\pi(1) = 1$, we have $\pi(n) = 2$ and $\pi(n-1) = 3$.  This then forces $\pi(n-2) = 4, \pi(n-3) = 5, \ldots, \pi(3) = n-1, \pi(2) = n$, so we have 

\begin{equation} \label{eq:k4}{1n\ldots 5432} \end{equation} 

We have exhausted all possibilities when requiring that $\pi(1) = 1$.  Notice that for $n = 4$, permutations \ref{eq:k2} and \ref{eq:k3} are the same.  The reverses of these permutations constitute the remaining perfectly convex permutations, hence the result. \end{proof}

Enumerating strongly convex permutations has proven much more difficult than the perfect case.  We now give another method of tracking the growth of these permutations: by considering building ``from the outside.''

\begin{defn}
Let $\pi\in S_n$ be $k$-convex.  Define 
\[
L(\pi) := 1(\pi(1)+1)(\pi(2)+1)\ldots(\pi(n)+1)
\]and 
\[
R(\pi) := (\pi(1)+1)(\pi(2)+1)\ldots(\pi(n)+1)(1)
\]to be, respectively, the \emph{left and right descendants of $\pi$}.    If $L(\pi)$ is $k$-convex, we say that $\pi$ \emph{left descends}, and similarly define what it means for a permutation to \emph{right descend}.

Analogously, let $W = W_1W_2\ldots W_n$ be a word of length $n$ on the alphabet $\{L, R\}$, then $W(\pi)$ is the composition $W_n(W_{n-1}(\ldots(W_2(W_1(\pi)))\ldots))$.  We say that \emph{$\tau$ is a descendant of $\pi$} and \emph{$\pi$ is an ancestor of $\tau$} if there exists some word $W$ on $\{L, R\}$ such that $W(\pi) = \tau$.
\end{defn}

\begin{prop}
Let $k < 3$.  If $\pi$ is a $k$-convex permutation of length $n$ with $\pi(1) = 1$, then $L^{R}(\pi)$ is a $k$-convex permutation of length $n-1$.  Similarly, if $\pi(n) = 1$, then $R^{R}(\pi)$ is a $k$-convex permutation of length $n-1$.
\end{prop}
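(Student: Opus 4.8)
The plan is to show directly that the length-reducing operation $L^R$ — which undoes $L$ by deleting the prepended $1$ and subtracting one from every remaining entry — preserves the $k$-convexity condition, and then to obtain the $R^R$ statement almost for free by reflecting through the reversal symmetry already established.

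First I would make $L^R(\pi)$ explicit and check it is well-defined. Since $\pi(1) = 1$ and $\pi$ is a bijection of $[n]$, the remaining values $\pi(2), \ldots, \pi(n)$ are exactly a rearrangement of $\{2, \ldots, n\}$, so $\pi$ lies in the image of $L$. Define $\sigma = L^R(\pi)$ by $\sigma(i) = \pi(i+1) - 1$ for $i \in [n-1]$; this is the unique permutation of $[n-1]$ with $L(\sigma) = \pi$, and in particular it is a genuine permutation of length $n-1$. This is precisely where the hypothesis $\pi(1) = 1$ is used.

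The key computation is that the second difference is unchanged by this operation: the additive constant cancels and the index merely shifts by one. For any interior index $i \in \{2, \ldots, n-2\}$ of $\sigma$,
\[
\sigma(i-1) + \sigma(i+1) - 2\sigma(i) = \pi(i) + \pi(i+2) - 2\pi(i+1).
\]
The right-hand side is exactly the convexity expression for $\pi$ evaluated at $i+1$, and since $i \in \{2, \ldots, n-2\}$ forces $i+1 \in \{3, \ldots, n-1\} \subseteq \{2, \ldots, n-1\}$, the $k$-convexity of $\pi$ makes it $\le k$. Thus every interior condition for $\sigma$ is inherited from a condition $\pi$ already satisfies, so $\sigma$ is $k$-convex. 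For the $R^R$ statement I would avoid repeating the computation and instead use the earlier proposition that $\pi^R$ is $k$-convex whenever $\pi$ is. A short check shows $R = (\cdot)^R \circ L \circ (\cdot)^R$, so the inverse operations satisfy $R^R(\pi) = (L^R(\pi^R))^R$. If $\pi(n) = 1$ then $\pi^R(1) = 1$, so $L^R(\pi^R)$ is a $k$-convex permutation of length $n-1$ by the first half of the argument, and reversing it once more preserves $k$-convexity; hence $R^R(\pi)$ is $k$-convex of length $n-1$.

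I do not expect a genuine obstacle here: the entire content is the invariance of second differences under an affine shift of the values together with a reindexing, which is the one-line identity above. The only points requiring a little care are bookkeeping ones — confirming that $L^R(\pi)$ is actually a permutation of $[n-1]$, and confirming that the index range of $\sigma$'s convexity conditions embeds into that of $\pi$ so that no new constraint appears. I would also remark that the argument never uses $k < 3$; that hypothesis is inherited from the surrounding discussion, but the conclusion in fact holds for every $k \ge 0$.
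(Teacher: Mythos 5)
Your proof is correct; the paper in fact states this proposition with no proof at all, and your argument --- that the second difference of $L^{R}(\pi)$ at position $i$ equals that of $\pi$ at position $i+1$ because the additive shift cancels, combined with the identity $R = (\cdot)^{R} \circ L \circ (\cdot)^{R}$ and the earlier reversal proposition to dispatch the $R^{R}$ case --- is exactly the routine verification the authors leave implicit. Your closing remark is also accurate: the hypothesis $k<3$ is never used in this deletion direction (it matters only in the surrounding discussion, where $k<3$ guarantees via 213/312-avoidance that every $k$-convex permutation begins or ends with $1$, so that at least one of the two deletions is always applicable).
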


Hence every $k$-convex permutation beginning with one descends from another $k$-convex permutation.  For example, the 1-convex permutation 123564 descends from 12453, and  its reverse, 465321, descends from 35421, the reverse of the ancestor of the original permutation.  This gives a mapping of strongly convex permutations of length $n$ (which must begin or end with 1) onto the strongly convex permutations of length $n-1$.  Now consider this process in reverse; if $\pi$ is $k$-convex for $k \in \{0, 1, 2\}$ and $\pi(2) - 2\pi(1) \leq k$, then $L(\pi)$ is $k$-convex ($\pi$ left descends), and similarly if $\pi(n-1) - 2\pi(n) \leq k$, then $R(\pi)$ is $k$-convex ($\pi$ right descends).  Notice that for $k = $ 1 or 2, every $k$-convex permutation $\pi$ has either a $k$-convex right descendant or a $k$-convex left descendant, as we have shown previously that $\pi$ must begin with 12, 13, or 23 or end with 21, 31, or 32, all of which satisfy the condition for a descendant to be $k$-convex.

\begin{figure}[!ht]
The graph displayed here is half of the first seven generations of the full 1-convex tree showing descendants of 12; the other half begins with 21 and branches in the same manner except that each descendant is placed by its reverse. \\

\centerline{\includegraphics[height = 4.2in]{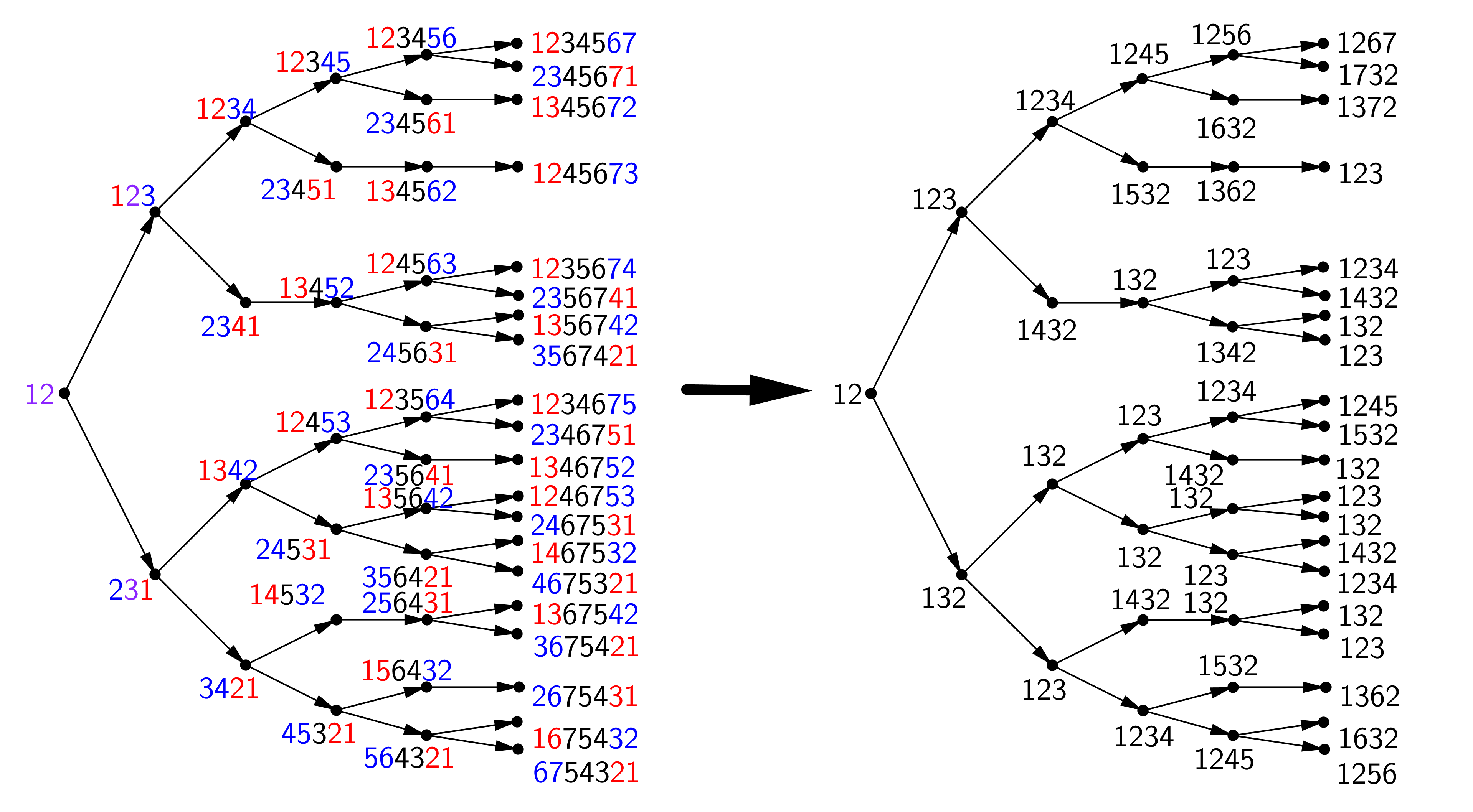}}
\begin{center}
1-convex descendant graph (full permutation and abbreviated by endpoints, seven generations)
\end{center}
\end{figure}

%\begin{figure}[!ht]
%\centerline{\includegraphics[height = 4.2in]{branchingtree2.png}}
%\begin{center}
%2-convex descendant graph (full permutation and abbreviated by endpoints, seven generations)
%\end{center}
%\end{figure}

Using this method to count and track strongly convex permutations, we determine $f_k(n)$ for $k \leq 2, n \leq 12$.

\begin{table}[!ht]
\caption{Number of Strongly Convex Permutations for Small $n$}
\centering
\begin{tabular}{c c c c}
\hline
$n$ & $f_0(n)$ & $f_1(n)$ & $f_2(n)$ \\
\hline
1 & 1 & 1 & 1 \\
2 & 2 & 2 & 2 \\
3 & 4 & 4 & 4 \\
4 & 6 & 8 & 8 \\
5 & 8 & 14 & 16 \\
6 & 8 & 24 & 30 \\
7 & 8 & 40 & 56 \\
8 & 8 & 66 & 102 \\
9 & 8 & 106 & 186 \\
10 & 8 & 170 & 336 \\
11 & 8 & 270 & 606 \\
12 & 8 & 426 & 1088
\end{tabular}
\end{table}

\begin{defn}
Let $\pi$ be a $k$-convex permutation of length $n$.  Define 
\[
d_k(\pi, i) :=  |k\text{-convex descendants of }\pi \text{ of length }n+i| = |\{W \in \{L, R\}^i \,|\, W(\pi) \text{ is $k$-convex}\}|
\]

We say that two permutations $\pi$ and $\tau$ are \emph{identically-descending with respect to $k$} if $d_k(\pi, i) = d_k(\tau, i)$ for all $i \in \mathbb{N}$.  In this case, we write $d_k(\pi) = d_k(\tau)$.
\end{defn}

Notice that by this definition, every permutation is identically-descending to its reverse, as the reverse permutation of $W(\pi)$ is $W^C(\pi^{R})$, where $W'$ is formed from $W$ by changing all $L$'s to $R$'s and all $R$'s to $L$'s.  We generalize this in the following Lemma.

\begin{lem}
Let $\pi$ be a permutation of length $n$ and let $\tau$ be a permutation of length $m$.  If $\pi(1) = \tau(1), \pi(2) = \tau(2)$, $\pi(n-1) = \tau(m-1)$, and $\pi(n) = \tau(m)$, or, if $\pi(1) = \tau(m), \pi(2) = \tau(m-1), \pi(n-1) = \tau(2)$, and $\pi(n) = \tau(1)$, then $d_k(\pi) = d_k(\tau)$.
\end{lem}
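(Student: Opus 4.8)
The plan is to show that the entire descendant count $d_k(\pi, i)$ is governed by a small \emph{state}, namely the four endpoint values $(\pi(1), \pi(2), \pi(n-1), \pi(n))$, which updates deterministically under $L$ and $R$. First I would record the purely local effect of the two operations. Since $L$ prepends a $1$ and adds the constant $1$ to every old entry, it leaves every interior second difference of $\pi$ unchanged (second differences are translation invariant) and introduces exactly one new second-difference condition, at the junction of the new first entry with the old start. Concretely, $L(\pi)$ is $k$-convex if and only if $\pi$ is $k$-convex and $\pi(2) - 2\pi(1) \le k$, and symmetrically $R(\pi)$ is $k$-convex if and only if $\pi$ is $k$-convex and $\pi(n-1) - 2\pi(n) \le k$. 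This is exactly the descent criterion already isolated before the Lemma.

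Next I would set $S(\pi) = (\pi(1), \pi(2), \pi(n-1), \pi(n))$ and verify that it transforms as a function of itself: reading off endpoints gives $S(L(\pi)) = (1, \pi(1)+1, \pi(n-1)+1, \pi(n)+1)$ and $S(R(\pi)) = (\pi(1)+1, \pi(2)+1, \pi(n)+1, 1)$. The crucial observation is that each new state depends only on $S(\pi)$, that the applicability test for $L$ depends only on the first two coordinates of the state, and that the test for $R$ depends only on the last two. Consequently, once a permutation is $k$-convex, the whole branching pattern of $k$-convex descendants — i.e.\ exactly which words $W \in \{L, R\}^i$ yield a $k$-convex permutation — is determined by iterating these two state maps from $S(\pi)$ and checking the applicability test before each letter. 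One should handle the degenerate bookkeeping when $n \le 2$ (where positions $2$ and $n-1$ coincide) separately, but this does not affect the recursion.

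With this in hand the direct case is a straightforward induction on $i$: if $S(\pi) = S(\tau)$ and both are $k$-convex, then for every word $W$ the sequences of visited states agree step by step, so $W(\pi)$ is $k$-convex precisely when $W(\tau)$ is, giving $d_k(\pi, i) = d_k(\tau, i)$ for all $i \in \mathbb{N}$. For the reversed-endpoint hypothesis I would reduce to the direct case using the reversal symmetry already noted: since $L(\sigma)^R = R(\sigma^R)$ and $R(\sigma)^R = L(\sigma^R)$, reversal preserves $k$-convexity, and $W \mapsto W^C$ is a bijection of $\{L,R\}^i$, every permutation is identically-descending to its reverse. The reversed hypothesis says exactly that $\pi$ and $\tau^R$ have matching endpoints in the direct sense, so $S(\pi) = S(\tau^R)$ and therefore $d_k(\pi) = d_k(\tau^R) = d_k(\tau)$.

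The main obstacle is not conceptual but careful verification in the middle step: confirming that $L$ and $R$ create no interior violations and exactly one boundary condition, and correctly tracking how the ``second-from-each-end'' coordinates migrate (under $R$, the old last entry becomes the new second-to-last, and so on). I would also fix the convention on $\mathbb{N}$ and the value $i = 0$, and note that if a permutation fails to be $k$-convex then $L$ and $R$ can never repair an existing interior violation, so $d_k \equiv 0$ in that case; this keeps the statement consistent even when the matched endpoints do not by themselves force $k$-convexity.
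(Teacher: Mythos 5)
Your proposal is correct and is essentially the paper's own argument made explicit: the paper justifies the lemma in one line by observing that $L$ and $R$ preserve all interior second differences and introduce exactly one new condition at the affected endpoint, which is precisely your state-machine formulation $S(\pi) = (\pi(1), \pi(2), \pi(n-1), \pi(n))$ with its deterministic updates, applicability tests, and reduction of the reversed case via $L(\sigma)^R = R(\sigma^R)$. One small caveat: your closing remark that the statement stays consistent for non-convex inputs is not quite right, since a $k$-convex $\pi$ and a non-$k$-convex $\tau$ can share all four endpoint values (giving $d_k(\pi, i) \neq 0 = d_k(\tau, i)$ for some $i$), but this is harmless because $d_k$ is only defined for $k$-convex permutations, so the lemma implicitly assumes both $\pi$ and $\tau$ are $k$-convex.
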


This is true simply because when considering whether the left and right descendants of a permutation are strongly convex, the only new condition to check is at the left or right endpoint, respectively.  This suggests that when considering strongly convex permutations, we can abbreviate the permutations $1a\ldots bc$ and $cb\ldots a1$ by $1abd$ and not lose any information about its descendants.  This result allows us to simplify the graph by removing some of the vertices, but it is possible to do even better:

\begin{thm} \label{thm:id}
Fix $k \in \{0, 1, 2\}$ and let $\pi = ab\ldots cd$ and $\tau = ab\ldots c'd$ be two $k$-convex permutations.  If $R(\pi)$ and $R(\tau)$ are $k$-convex, then $d_k(\pi) = d_k(\tau)$.
\end{thm}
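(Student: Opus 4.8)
The plan is to induct on the descent depth $i$, proving the uniform statement that $d_k(\sigma, i) = d_k(\sigma', i)$ for \emph{every} pair $(\sigma, \sigma')$ meeting the hypotheses of the theorem (same first two entries, same last entry, differing second-to-last entry, and both right-descending). The engine of the induction is the one-step recursion
\[
d_k(\sigma, i) = \mathbf{1}[\sigma \text{ left-descends}]\, d_k(L(\sigma), i-1) + \mathbf{1}[\sigma \text{ right-descends}]\, d_k(R(\sigma), i-1),
\]
obtained by splitting each descendant word of $\sigma$ according to its first letter. This recursion is valid because a failure of $k$-convexity is \emph{permanent} under the descent operations: both $L$ and $R$ merely add $1$ to every existing entry and attach a single new entry at one end, so every interior second difference of $L(\sigma)$ or $R(\sigma)$ equals a corresponding second difference of $\sigma$, and the only genuinely new condition is the single boundary one --- precisely $\sigma(2) - 2\sigma(1) \le k$ for $L$ and $\sigma(n-1) - 2\sigma(n) \le k$ for $R$, the left- and right-descent conditions already recorded in the text. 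Thus $W(\sigma)$ is $k$-convex if and only if each prefix operation meets its boundary condition as it is applied, which is exactly what the recursion encodes.

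First I would record that left-descendability depends only on the first two entries and right-descendability only on the last two. Since $\pi = ab\ldots cd$ and $\tau = ab\ldots c'd$ share $(a,b)$, they agree on whether they left-descend; and both right-descend by hypothesis, since $R(\pi), R(\tau)$ being $k$-convex is exactly $c - 2d \le k$ and $c' - 2d \le k$. Hence the two indicator factors agree for $\pi$ and $\tau$, and it remains only to match the two summands. For the $R$-summand, the key observation is that applying $R$ pushes the differing entries $c, c'$ into the interior: both $R(\pi)$ and $R(\tau)$ have the \emph{same} four boundary entries, namely first two $(a+1, b+1)$ and last two $(d+1, 1)$. By the preceding Lemma (identical corners force identical descent counts), $d_k(R(\pi), i-1) = d_k(R(\tau), i-1)$, with no appeal to the inductive hypothesis.

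For the $L$-summand --- needed only when $\pi$, and hence $\tau$, left-descends --- I would note that $L(\pi) = 1\,(a+1)\ldots(c+1)(d+1)$ and $L(\tau) = 1\,(a+1)\ldots(c'+1)(d+1)$ again share their first two entries $(1, a+1)$ and their last entry $d+1$, differ only in the second-to-last entry ($c+1$ versus $c'+1$), and both right-descend because $c - 2d \le k$ gives $(c+1) - 2(d+1) = c - 2d - 1 \le k$ (and likewise for $\tau$). Hence $(L(\pi), L(\tau))$ again satisfies the hypotheses of the theorem, so the inductive hypothesis applied at depth $i-1$ yields $d_k(L(\pi), i-1) = d_k(L(\tau), i-1)$. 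Combining the two summands gives $d_k(\pi, i) = d_k(\tau, i)$; since the base case $i = 0$ is trivial ($d_k(\cdot, 0) = 1$), the induction closes. (Equivalently, one can argue directly without induction by splitting each word at its first $R$: before that $R$ only $L$'s occur, whose left-descent conditions depend only on the shared $(a,b)$, and whose right-descent conditions stay satisfiable since $c-2d$ and $c'-2d$ only decrease under $L$; the first $R$ then synchronizes the corners.)

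The one genuinely load-bearing step is the corner bookkeeping: verifying that $R$ synchronizes the two boundary profiles while $L$ keeps the pair inside the hypothesis class of the theorem. The conceptual crux is that the distinguishing entry (the second-to-last) can influence future descents only through the right-descent condition, and a single $R$ both consumes that condition and overwrites that slot with $d+1$, independently of $c$ or $c'$, after which the two permutations are indistinguishable to all further descents. I expect the most delicate point to be justifying the recursion itself --- that intermediate convexity failures cannot be repaired by later operations --- since everything downstream relies on being able to check $k$-convexity one boundary condition at a time.
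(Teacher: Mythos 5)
Your proof is correct, and it rests on exactly the same two pillars as the paper's: the preceding Lemma synchronizes the corner data once an $R$ is applied (both $R(\pi)$ and $R(\tau)$ begin $(a+1)(b+1)$ and end $(d+1)1$), and applications of $L$ preserve everything relevant. The difference is architectural. The paper proves the stronger \emph{set} equality $W_\pi = W_\tau$ — the very same words on $\{L,R\}$ produce $k$-convex descendants, not merely equinumerous families — by a trichotomy on the shape of a word, namely $R\sigma$, $L^n$, or $L^nR\sigma$, computing $L^n(\pi) = 12\ldots n(a+n)\ldots(c+n)(d+n)$ explicitly and checking that it still right-descends because $(c+n) - 2(d+n) = c - 2d - n \leq k$. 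Your induction on depth unrolls to exactly this trichotomy: the $R$-branch is the paper's first case, your closure-under-$L$ step (the slack $c - 2d$ drops by $1$ per application of $L$, so the pair $(L(\pi), L(\tau))$ re-enters the hypothesis class) is the paper's $L^nR\sigma$ computation amortized one step at a time, and your closing parenthetical is the paper's argument almost verbatim. What your packaging buys is an explicit justification of the one-step recursion $d_k(\sigma,i) = \mathbf{1}[\sigma \text{ left-descends}]\, d_k(L(\sigma),i-1) + \mathbf{1}[\sigma \text{ right-descends}]\, d_k(R(\sigma),i-1)$: since $L$ and $R$ shift entries uniformly and append a single end entry, all interior second differences persist, so a convexity failure is permanent and $k$-convexity of $W(\sigma)$ can be verified one boundary condition at a time. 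The paper relies on this permanence throughout but never states it — it is the unstated content of the one-line justification of the Lemma — so making it explicit is a genuine improvement in rigor. Conversely, the paper's formulation yields the marginally stronger conclusion that the descendant word sets coincide, whereas your induction only equates the counts $d_k(\pi,i) = d_k(\tau,i)$; but the theorem as stated claims only the counts, so nothing is lost.
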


\begin{proof}Let $W_\pi$ be the set of words on the alphabet $\{L, R\}$ such that $W(\pi)$ is $k$-convex for every $W \in W_\pi$; similarly define $W_\tau$.  Suppose $R\sigma \in W_\pi$ for some word $\sigma$, then $R\sigma(\tau) = \sigma(R(\tau)) = \sigma((a+1)(b+1)\ldots(c+1)(d+1)1)$.  The permutation $(a+1)(b+1)\ldots(c+1)(d+1)1$ shares the four-letter abbreviation $1(d+1)(b+1)(a+1)$ with $(a+1)(b+1)\ldots(c'+1)(d+1)1) = R(\tau)$, so it must be that $R\sigma(\tau)$ is $k$-convex, and so $R(\sigma) \in W_\tau$.  Analogously, $R\sigma \in W_\tau \implies R\sigma \in W_\pi$.

For any $n \in \mathbb{N}$, clearly $L^n (= LL\ldots L) \in W_\pi \iff L^n \in W_\tau$ since the convexity of the left descendants of $\pi$ and $\tau$ depend only on $a$ and $b$, which are shared.  Finally, for any $n \in \mathbb{N}$ and any $\sigma$ on $\{L, R\}$, \\ \\
\centerline{$L^nR\sigma \in W_\pi \iff L^n \in W_\pi$ and $R\sigma \in W_{L^n(\pi)}$.} \\

Notice that $L^n(\pi) = 12\ldots n (a+n)(b+n) \ldots (c+n)(d+n)$ and $L^n(\tau) = 12\ldots n (a+n)(b+n) \ldots (c'+n)(d+n)$ can be abbreviated $12ef$ and $12e'f$ for $e = c+n, e'  c' + n, f = d+n$, and $12ef$ and $12e'f$ both right descend to $23(f+1)1$ since $e - 2f = c+n - 2d - 2n < c - 2d \leq k, e' - 2f = e'+n - 2d - 2n < e' - 2d \leq k$.  Thus \\ \\
\centerline{$L^n \in W_\pi$ and $R\sigma \in W_{L^n(\pi)} \iff L^n \in W_\tau$ and $R\sigma \in W_{L^n(\tau)} \iff L^nR\sigma \in W_\tau.$} \\

This concludes the proof that $W_\pi = W_\tau$, and so $\pi$ and $\tau$ are identically-descending.
\end{proof}

We may now express many permutations by the same four-character abbreviation without losing any information about their descendants, choosing the smallest numbers possible for convenience.  For example, whereas we previous abbreviated the permutations such as 123564 and 46785321 by 1264, we may now abbreviate these permutations, along with others, including 12354, 487654321, and 1235674, as 1234; all of these 2-convex permutations are identically-descending with respect to two.  The ability to express the behavior of multiple permutations using the same abbreviation suggests that recursion may be an effective tool in attempting to enumerate.  We previously displayed convex permutations as a directed tree with edges from permutations to their first descendants.  We now display the diagram again, this time allowing for cycles and reducing each permutation to the ``smallest'' abbreviation possible:

\begin{figure}[!ht]
\centerline{\includegraphics[height = 3.8in]{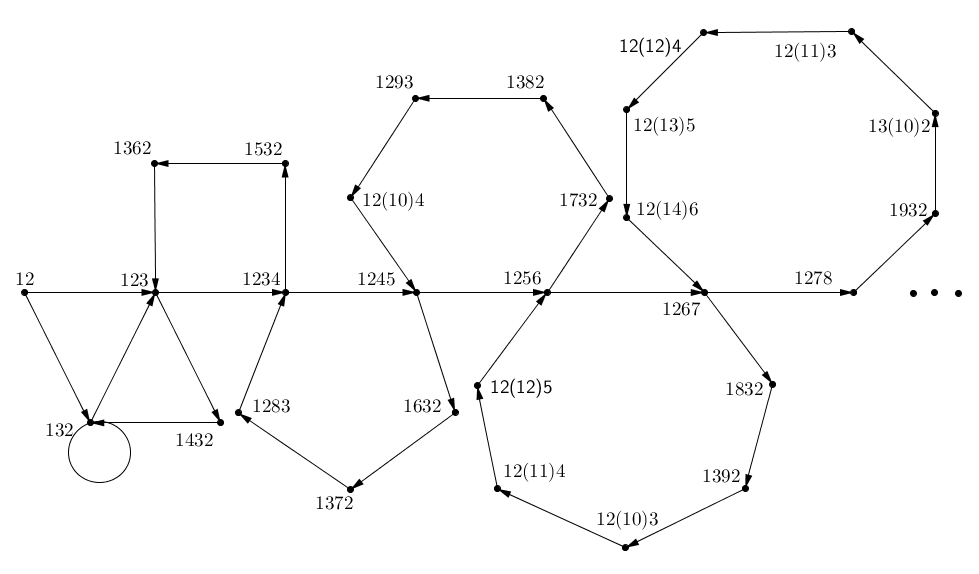}}
\begin{center}
1-convex identically-descending transition digraph
\end{center}
\end{figure}

\setcounter{MaxMatrixCols}{24}
$A = \begin{bmatrix} 
0 & 1 & 0 & 1 & 0 & 0 & 0 & 0 & 0 & 0 & 0 & 0 & 0 & 0 & 0 & 0 & 0 & 0 & 0 & 0 & 0 & 0 & \ldots \\
0 & 1 & 0 & 1 & 0 & 0 & 0 & 0 & 0 & 0 & 0 & 0 & 0 & 0 & 0 & 0 & 0 & 0 & 0 & 0 & 0 & 0 & \ldots \\
0 & 1 & 0 & 0 & 0 & 0 & 0 & 0 & 0 & 0 & 0 & 0 & 0 & 0 & 0 & 0 & 0 & 0 & 0 & 0 & 0 & 0 & \ldots \\
0 & 0 & 1 & 0 & 1 & 0 & 0 & 0 & 0 & 0 & 0 & 0 & 0 & 0 & 0 & 0 & 0 & 0 & 0 & 0 & 0 & 0 & \ldots \\
0 & 0 & 0 & 0 & 0 & 1 & 0 & 1 & 0 & 0 & 0 & 0 & 0 & 0 & 0 & 0 & 0 & 0 & 0 & 0 & 0 & 0 & \ldots \\
0 & 0 & 0 & 0 & 0 & 0 & 1 & 0 & 0 & 0 & 0 & 0 & 0 & 0 & 0 & 0 & 0 & 0 & 0 & 0 & 0 & 0 & \ldots \\
0 & 0 & 0 & 1 & 0 & 0 & 0 & 0 & 0 & 0 & 0 & 0 & 0 & 0 & 0 & 0 & 0 & 0 & 0 & 0 & 0 & 0 & \ldots \\
0 & 0 & 0 & 0 & 0 & 0 & 0 & 0 & 1 & 0 & 0 & 1 & 0 & 0 & 0 & 0 & 0 & 0 & 0 & 0 & 0 & 0 & \ldots \\
0 & 0 & 0 & 0 & 0 & 0 & 0 & 0 & 0 & 1 & 0 & 0 & 0 & 0 & 0 & 0 & 0 & 0 & 0 & 0 & 0 & 0 & \ldots \\
0 & 0 & 0 & 0 & 0 & 0 & 0 & 0 & 0 & 0 & 1 & 0 & 0 & 0 & 0 & 0 & 0 & 0 & 0 & 0 & 0 & 0 & \ldots \\
0 & 0 & 0 & 0 & 1 & 0 & 0 & 0 & 0 & 0 & 0 & 0 & 0 & 0 & 0 & 0 & 0 & 0 & 0 & 0 & 0 & 0 & \ldots \\
0 & 0 & 0 & 0 & 0 & 0 & 0 & 0 & 0 & 0 & 0 & 0 & 1 & 0 & 0 & 0 & 1 & 0 & 0 & 0 & 0 & 0 & \ldots \\
0 & 0 & 0 & 0 & 0 & 0 & 0 & 0 & 0 & 0 & 0 & 0 & 0 & 1 & 0 & 0 & 0 & 0 & 0 & 0 & 0 & 0 & \ldots \\
0 & 0 & 0 & 0 & 0 & 0 & 0 & 0 & 0 & 0 & 0 & 0 & 0 & 0 & 1 & 0 & 0 & 0 & 0 & 0 & 0 & 0 & \ldots \\
0 & 0 & 0 & 0 & 0 & 0 & 0 & 0 & 0 & 0 & 0 & 0 & 0 & 0 & 0 & 1 & 0 & 0 & 0 & 0 & 0 & 0 & \ldots \\
0 & 0 & 0 & 0 & 0 & 0 & 0 & 1 & 0 & 0 & 0 & 0 & 0 & 0 & 0 & 0 & 0 & 0 & 0 & 0 & 0 & 0 & \ldots \\
0 & 0 & 0 & 0 & 0 & 0 & 0 & 0 & 0 & 0 & 0 & 0 & 0 & 0 & 0 & 0 & 0 & 1 & 0 & 0 & 0 & 0 & \ldots \\
0 & 0 & 0 & 0 & 0 & 0 & 0 & 0 & 0 & 0 & 0 & 0 & 0 & 0 & 0 & 0 & 0 & 0 & 1 & 0 & 0 & 0 & \ldots \\
0 & 0 & 0 & 0 & 0 & 0 & 0 & 0 & 0 & 0 & 0 & 0 & 0 & 0 & 0 & 0 & 0 & 0 & 0 & 1 & 0 & 0 & \ldots \\
0 & 0 & 0 & 0 & 0 & 0 & 0 & 0 & 0 & 0 & 0 & 0 & 0 & 0 & 0 & 0 & 0 & 0 & 0 & 0 & 1 & 0 & \ldots \\
0 & 0 & 0 & 0 & 0 & 0 & 0 & 0 & 0 & 0 & 0 & 0 & 0 & 0 & 0 & 0 & 0 & 0 & 0 & 0 & 0 & 1 & \ldots \\
0 & 0 & 0 & 0 & 0 & 0 & 0 & 0 & 0 & 0 & 0 & 1 & 0 & 0 & 0 & 0 & 0 & 0 & 0 & 0 & 0 & 0 & \ldots \\
\vdots & \vdots & \vdots & \vdots & \vdots & \vdots & \vdots & \vdots & \vdots & \vdots & \vdots & \vdots & \vdots & \vdots & \vdots & \vdots & \vdots & \vdots & \vdots & \vdots & \vdots & \vdots & \ddots
\end{bmatrix}$

Let $A$ be the (infinite) adjacency matrix for the 1-convex descendant graph.  We know then that since $A^n_{ij}$ is the number of walks of length $n$ from the $i^{\text{th}}$ vertex to the $j^{\text{th}}$ and a 1-convex permutation of length $n+1$ is determined uniquely by a walk of length $n$ beginning at the vertex 12, we have 
\[
\sum_{j = 1}^\infty A^{n-1}_{1j} = \frac{1}{2}f_1(n).
\]

The issue is that because the graph is infinite, so is the matrix.  We can obtain a lower bound on the growth rate of the sequence by truncating the graph at the edge from 1267 to 1278 (and therefore the corresponding adjacency matrix) and calculating $(I - Ax)^{-1}$, which is formally equal to $\sum_{n=0}^\infty A^nx^n$.  Each cell in the resulting matrix will contain a generating function for the corresponding cell of $A$.  As we are interested in the number of walks beginning at the vertex 12, represented by the first row and column of the adjacency matrix, we sum over the first row to obtain a generating function for half the number of walks of length $n+1$.  Using the first 22 rows and columns of the matrix (as pictured above), we obtain: 
\[
F_{1, -}(x) = \frac{-1 - x - 2 x^2 - 2 x^3 - 2 x^4 - 2 x^5 - x^6 + x^7 + x^8 + 
 2 x^9 + x^{10} + x^{11} - x^{14}}{-1 + x + x^3 + x^4 - 2 x^8 - x^9 - 2 x^{10} + x^{13} + x^{15}}
\]

We now have a rational generating function whose denominator has a unique minimal root $r = 0.65149869151455837735\ldots$, which, as described in \cite[p. 171]{Wilf94} gives us an asymptotic lower bound: 

\[
f_1(n) > f_{1, -}(n) = \Theta\left(\frac{1}{r}^n\right) = \Theta(1.5349224995\ldots^n)
\]

To find an upper bound, consider the edge of the graph that goes from 1267 to 1278.  Because clearly 1267 has more descendants than 1278, we can truncate the graph at 1267 and insert a loop from 1267 to itself.  The adjacency matrix for this upper bound graph is the same as the one above, except that we must insert a 1 into the $22^{\text{nd}}$ row and column.  Again, we obtain a rational generating function: 
\[
F_{1, +}(x) = \frac{-1 - x^2 + x^6 + 2 x^7 + x^8 + 2 x^9 + x^{10} + x^{11} - x^{14}}{-1 + 2 x - x^2 + x^3 - x^5 - x^8 - x^{10} + x^{11} - x^{12} + x^{13} + x^{15}}
\]

$F_{1, +}$ also has a unique minimal root of $s = 0.65145978572056851317\ldots$, which gives the asymptotic upper bound: 
\[
f_1(n) < f_{1, +}(n) =  \Theta\left(\frac{1}{s}^n\right) =\Theta(1.535014167\ldots^n)
\] 

We can use the same procedure to estimate the asymptotic growth of $f_2(n)$.  The digraph used in this case is slightly different: \\

\begin{figure}[!ht]
\centerline{\includegraphics[height = 4.2in]{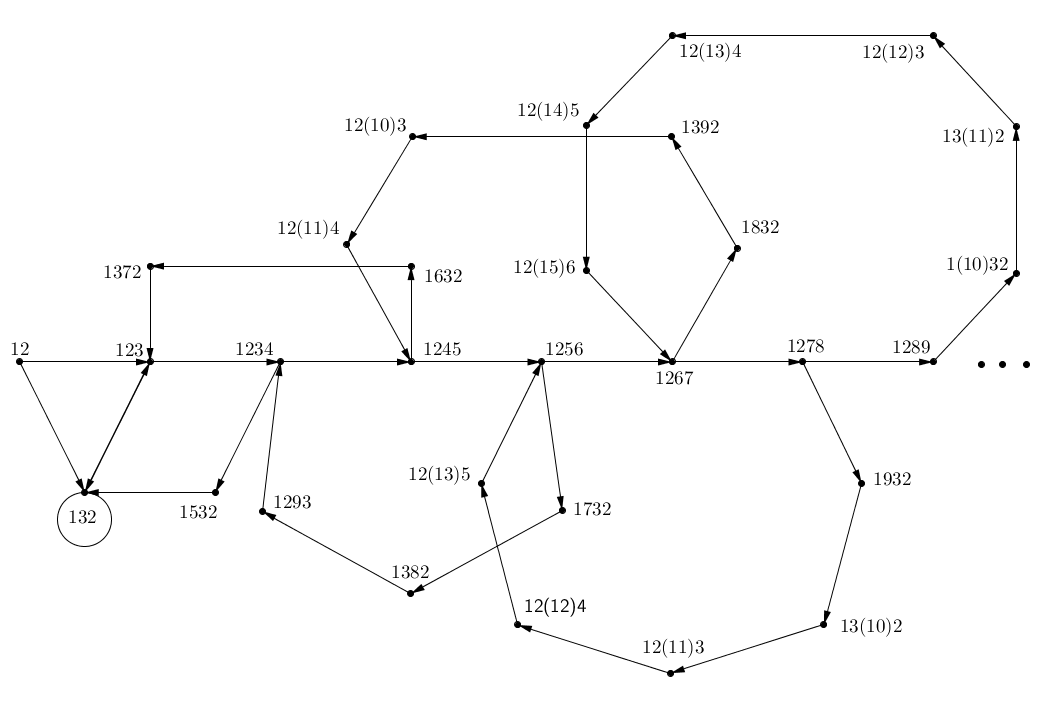}}
\begin{center}
2-convex identically-descending transition digraph
\end{center}
\end{figure}

\setcounter{MaxMatrixCols}{25}
$B = \begin{bmatrix} 
0 & 1 & 1 & 0 & 0 & 0 & 0 & 0 & 0 & 0 & 0 & 0 & 0 & 0 & 0 & 0 & 0 & 0 & 0 & 0 & 0 & 0 & 0 & 0 & 0 \ldots \\
0 & 0 & 1 & 1 & 0 & 0 & 0 & 0 & 0 & 0 & 0 & 0 & 0 & 0 & 0 & 0 & 0 & 0 & 0 & 0 & 0 & 0 & 0 & 0 & 0 \ldots \\
0 & 1 & 1 & 0 & 0 & 0 & 0 & 0 & 0 & 0 & 0 & 0 & 0 & 0 & 0 & 0 & 0 & 0 & 0 & 0 & 0 & 0 & 0 & 0 & 0 \ldots \\
0 & 0 & 0 & 0 & 1 & 0 & 1 & 0 & 0 & 0 & 0 & 0 & 0 & 0 & 0 & 0 & 0 & 0 & 0 & 0 & 0 & 0 & 0 & 0 & 0 \ldots \\
0 & 0 & 0 & 0 & 0 & 1 & 0 & 0 & 0 & 0 & 0 & 0 & 0 & 0 & 0 & 0 & 0 & 0 & 0 & 0 & 0 & 0 & 0 & 0 & 0 \ldots \\
0 & 1 & 1 & 0 & 0 & 0 & 0 & 0 & 0 & 0 & 0 & 0 & 0 & 0 & 0 & 0 & 0 & 0 & 0 & 0 & 0 & 0 & 0 & 0 & 0 \ldots \\
0 & 0 & 0 & 0 & 0 & 0 & 0 & 1 & 0 & 1 & 0 & 0 & 0 & 0 & 0 & 0 & 0 & 0 & 0 & 0 & 0 & 0 & 0 & 0 & 0 \ldots \\
0 & 0 & 0 & 0 & 0 & 0 & 0 & 0 & 1 & 0 & 0 & 0 & 0 & 0 & 0 & 0 & 0 & 0 & 0 & 0 & 0 & 0 & 0 & 0 & 0 \ldots \\
0 & 1 & 0 & 0 & 0 & 0 & 0 & 0 & 0 & 0 & 0 & 0 & 0 & 0 & 0 & 0 & 0 & 0 & 0 & 0 & 0 & 0 & 0 & 0 & 0 \ldots \\
0 & 0 & 0 & 0 & 0 & 0 & 0 & 0 & 0 & 0 & 1 & 0 & 0 & 1 & 0 & 0 & 0 & 0 & 0 & 0 & 0 & 0 & 0 & 0 & 0 \ldots \\
0 & 0 & 0 & 0 & 0 & 0 & 0 & 0 & 0 & 0 & 0 & 1 & 0 & 0 & 0 & 0 & 0 & 0 & 0 & 0 & 0 & 0 & 0 & 0 & 0 \ldots \\
0 & 0 & 0 & 0 & 0 & 0 & 0 & 0 & 0 & 0 & 0 & 0 & 1 & 0 & 0 & 0 & 0 & 0 & 0 & 0 & 0 & 0 & 0 & 0 & 0 \ldots \\
0 & 0 & 0 & 1 & 0 & 0 & 0 & 0 & 0 & 0 & 0 & 0 & 0 & 0 & 0 & 0 & 0 & 0 & 0 & 0 & 0 & 0 & 0 & 0 & 0 \ldots \\
0 & 0 & 0 & 0 & 0 & 0 & 0 & 0 & 0 & 0 & 0 & 0 & 0 & 0 & 1 & 0 & 0 & 0 & 1 & 0 & 0 & 0 & 0 & 0 & 0 \ldots \\
0 & 0 & 0 & 0 & 0 & 0 & 0 & 0 & 0 & 0 & 0 & 0 & 0 & 0 & 0 & 1 & 0 & 0 & 0 & 0 & 0 & 0 & 0 & 0 & 0 \ldots \\
0 & 0 & 0 & 0 & 0 & 0 & 0 & 0 & 0 & 0 & 0 & 0 & 0 & 0 & 0 & 0 & 1 & 0 & 0 & 0 & 0 & 0 & 0 & 0 & 0 \ldots \\
0 & 0 & 0 & 0 & 0 & 0 & 0 & 0 & 0 & 0 & 0 & 0 & 0 & 0 & 0 & 0 & 0 & 1 & 0 & 0 & 0 & 0 & 0 & 0 & 0 \ldots \\
0 & 0 & 0 & 0 & 0 & 0 & 1 & 0 & 0 & 0 & 0 & 0 & 0 & 0 & 0 & 0 & 0 & 0 & 0 & 0 & 0 & 0 & 0 & 0 & 0 \ldots \\
0 & 0 & 0 & 0 & 0 & 0 & 0 & 0 & 0 & 0 & 0 & 0 & 0 & 0 & 0 & 0 & 0 & 0 & 0 & 1 & 0 & 0 & 0 & 0 & 0 \ldots \\
0 & 0 & 0 & 0 & 0 & 0 & 0 & 0 & 0 & 0 & 0 & 0 & 0 & 0 & 0 & 0 & 0 & 0 & 0 & 0 & 1 & 0 & 0 & 0 & 0 \ldots \\
0 & 0 & 0 & 0 & 0 & 0 & 0 & 0 & 0 & 0 & 0 & 0 & 0 & 0 & 0 & 0 & 0 & 0 & 0 & 0 & 0 & 1 & 0 & 0 & 0 \ldots \\
0 & 0 & 0 & 0 & 0 & 0 & 0 & 0 & 0 & 0 & 0 & 0 & 0 & 0 & 0 & 0 & 0 & 0 & 0 & 0 & 0 & 0 & 1 & 0 & 0 \ldots \\
0 & 0 & 0 & 0 & 0 & 0 & 0 & 0 & 0 & 0 & 0 & 0 & 0 & 0 & 0 & 0 & 0 & 0 & 0 & 0 & 0 & 0 & 0 & 1 & 0 \ldots \\
0 & 0 & 0 & 0 & 0 & 0 & 0 & 0 & 0 & 1 & 0 & 0 & 0 & 0 & 0 & 0 & 0 & 0 & 0 & 0 & 0 & 0 & 0 & 0 & 0 \ldots \\
\vdots & \vdots & \vdots & \vdots & \vdots & \vdots & \vdots & \vdots & \vdots & \vdots & \vdots & \vdots & \vdots & \vdots & \vdots & \vdots & \vdots & \vdots & \vdots & \vdots & \vdots & \vdots & \vdots & \vdots & \ddots
\end{bmatrix}$

Similarly, here we will find a lower bound on the growth rate by truncating the graph at the edge between 1278 and 1289, and find an upper bound by creating a loop from 1278 to itself, using the resulting matrices to find generating functions for the cells in the first row:

\[
F_{2, -}(x) = \frac{1 + x + x^2 + 2 x^3 + 2 x^4 + 2 x^5 + 2 x^6 + x^7 - x^{10} - x^{11} - x^{12}}{1 - x - x^2 - x^4 - x^5 + x^8 + 
 2 x^9 + x^{10} + x^{11} + x^{12} + x^{13} - x^{14}}
\]

$F_{2, -}$ also has a unique minimal root of $t = 0.55979335021175578170\ldots$, which gives the asymptotic upper bound: 
\[
f_2(n) > f_{2, -}(n) =  \Theta\left(\frac{1}{t}^n\right) =\Theta(1.786373489\ldots^n)
\] 

And, 
\[
F_{2, -}(x) = \frac{1 + x^3 - x^7 - x^8 - x^9 - 2 x^{10} - x^{11} - x^{12}}{1 - 2x + x^3 - x^4 + x^6 + x^8 + x^{11} + x^{13} - x^{14}}
\]

$F_{1, +}$ also has a unique minimal root of $u = 0.55977426822528580510\ldots$, which gives the asymptotic upper bound: 
\[
f_2(n) < f_{2, +}(n) =  \Theta\left(\frac{1}{u}^n\right) =\Theta(1.786434384\ldots^n)
\]

It certainly appears that $f_1(n)$ and $f_2(n)$ grow by some asymptotic ratios $C_1$ and $C_2$, which are around 1.535 and 1.786, respectively, and that taking larger and larger transfer matrices will provide us with better and better approximations of these values.  It turns out that this is the case, formalized as follows:

\begin{cor}
For $k=1$ and $k=2$, $f_k(n)$ is $\Theta(C_k^n)$ for some finite $C_k$.
\end{cor}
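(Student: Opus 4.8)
The plan is to reduce the statement to a clean two-sided multiplicative estimate and then invoke Fekete's lemma in both directions. First I would recast the count in terms of the descendant dynamics already set up. Writing $V(\ell)$ for the number of words $W\in\{L,R\}^{\ell}$ such that $W(12)$ is $k$-convex, the walk-counting identity of the preceding discussion gives $f_k(n)=\Theta(V(n-2))$, so it suffices to prove $V(\ell)=\Theta(C_k^{\ell})$. The structural fact I would establish first is that the valid words form a \emph{prefix-closed} language: applying $L$ or $R$ only adds a single new second-difference constraint at the new penultimate position and leaves every interior constraint unchanged (the uniform $+1$ shift cancels in a second difference), so if $W(12)$ is $k$-convex then $W'(12)$ is $k$-convex for every prefix $W'$ of $W$. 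Hence $V$ is exactly the node-count by depth of the (infinite) descendant tree, and $V(a+b)=\sum_{W'\text{ valid},\,|W'|=a} d_k(W'(12),b)$, where $d_k(s,b)$ denotes the number of valid length-$b$ continuations from the state $s$.

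Next I would prove the two inequalities
\[
\alpha\,V(a)\,V(b)\ \le\ V(a+b)\ \le\ V(a)\,V(b)
\]
for a constant $\alpha\in(0,1]$ independent of $a$ and $b$. Granting these, set $b_\ell=\alpha V(\ell)$ and $c_\ell=V(\ell)$; the right inequality makes $c_\ell$ submultiplicative and the left makes $b_\ell$ supermultiplicative, so by Fekete's lemma $\lim_\ell V(\ell)^{1/\ell}=C_k$ exists, and the same two inequalities yield $C_k^{\ell}\le V(\ell)\le \alpha^{-1}C_k^{\ell}$. This gives $V(\ell)=\Theta(C_k^{\ell})$ and hence $f_k(n)=\Theta(C_k^{n})$, with $C_k$ automatically equal to the common growth rate that the truncated transfer matrices $A$ and $B$ approximate from below and above.

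The right inequality would follow from a \emph{domination lemma}: the start state $12$ maximizes continuation counts, i.e. $d_k(s,b)\le d_k(12,b)=V(b)$ for every reachable state $s$; summing over the $V(a)$ valid prefixes then gives $V(a+b)\le V(a)\,V(b)$. The left inequality would follow from a \emph{bounded-recurrence lemma}: there is a constant $c_0$ such that from every reachable state $s$ one may return to the start state $12$ in at most $c_0$ descendant steps along a fixed word $\sigma_s$ of length $j\le c_0$. Concatenating $\sigma_s$ with any valid length-$(b-j)$ continuation of $12$ gives $d_k(s,b)\ge V(b-j)$; since each valid word has at most two one-step extensions we have $V(b)\le 2^{\,j}V(b-j)$, so $d_k(s,b)\ge 2^{-c_0}V(b)$, and summing over prefixes yields the left inequality with $\alpha=2^{-c_0}$.

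The hard part will be the domination lemma. Intuitively $12$ is the least constrained permutation — its endpoint slopes are as shallow as possible, so it should admit the richest family of descendants — but turning this into a length-preserving injection from the descendants of an arbitrary state $s$ into those of $12$ (equivalently, an entrywise monotonicity of the transfer operator whose iterates are maximized in the $12$-coordinate) is delicate precisely because of the infinite spine, along which a walk can escape to states arbitrarily far out. I expect this is exactly the phenomenon that must be controlled to guarantee a \emph{pure} exponential $\Theta(C_k^{n})$ with no polynomial correction, in contrast with the $n^{-3/2}$ behaviour of the superficially similar coins-in-a-fountain generating function. The bounded-recurrence lemma, by contrast, should be a routine explicit construction (repeatedly applying $L$ to flatten the left end back down to the $12$ state). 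Once both lemmas are in place the corollary is immediate, and the argument applies uniformly to $k=1$ and $k=2$ using the digraphs encoded by $A$ and $B$ respectively.
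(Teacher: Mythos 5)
Your skeleton --- replace $f_k(n)$ by the walk count $V(\ell)$ in the descendant digraph, prove the two-sided bound $\alpha V(a)V(b)\le V(a+b)\le V(a)V(b)$, and apply Fekete in both directions --- is logically sound and genuinely different from the paper's argument. The paper works with $f_k$ directly: it claims $f_k(m+n)\le f_k(m)f_k(n)$ by splitting a unimodal $k$-convex permutation of $[m+n]$ into the pattern $\tau$ of its values in $[m]$ and the reduced pattern $\rho$ of the remaining $n$ values, then applies Fekete's lemma once. Your instinct that a matching supermultiplicative bound is \emph{also} needed is correct and goes beyond the paper: one-sided submultiplicativity only yields existence of $C_k=\lim f_k(n)^{1/n}$ together with $f_k(n)\ge C_k^n$, never the upper half of $\Theta(C_k^n)$ (consider $2^{n}\cdot 2n$, which is submultiplicative but not $\Theta(2^n)$). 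Indeed the paper's decomposition is not even injective as stated: with $m=3$, both $12345$ and $12453$ (each $1$-convex) yield $(\tau,\rho)=(123,12)$, and the paper's own table gives $f_1(5)=14>8=f_1(2)f_1(3)$, so the claimed inequality fails literally.

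However, your proposal has two genuine gaps, one of them fatal as written. The bounded-recurrence lemma is false: the reachable states include the entire spine $L^{j}(12)$, abbreviated $12\,(j{+}1)(j{+}2)$, and from spine level $l$ the only move toward the core is $R$, which lands on the class of $23(l{+}2)1$ and is then \emph{forced} to take $l-1$ consecutive $L$-steps before any choice reappears, arriving only one level down (the class of $12(l{-}1)l$, as in the paper's analysis of the subgraph $T$). So returning from level $l$ to the $12$-class costs at least $l$ steps (in fact order $l^2$), which is unbounded over reachable states; no uniform $c_0$ exists, your constant $\alpha=2^{-c_0}$ is unavailable, and the left inequality $\alpha V(a)V(b)\le V(a+b)$ would instead require showing that a positive proportion of length-$a$ walks end in a bounded core --- precisely the positivity/irreducibility control you were hoping to sidestep, and the same phenomenon that produces the $n^{-3/2}$ correction in the fountains problem. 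Second, you explicitly leave the domination lemma $d_k(s,b)\le d_k(12,b)$ unproved while your entire upper bound rests on it; you correctly observe that the infinite spine obstructs the natural length-preserving injection, but observing the difficulty is not resolving it. As it stands, neither of your two load-bearing inequalities is established, so the proposal does not yet prove the corollary --- though, to be fair, the paper's own single inequality also needs repair, and a corrected version of your two-sided scheme is arguably what a complete proof of the $\Theta$ statement would require.
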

%\begin{proof}
%To see this, note that we can construct a sequence of larger transfer matrices and find a decreasing sequence $(a_i)$ of upper bounding growth rates as well as an increasing sequence of lower bounding growth rates $(b_i)$ along the path with vertices labeled $12c(c+1)$. As both of these sequences have the same combinatorial interpretation in the limit they approach the same value, and so we see that we can use the squeeze theorem to find that the limiting growth rate exists. 
%\end{proof}

\begin{proof}  
We proceed by using subadditivity techniques, done as in \cite{Arratia99}.  We have that for $k=1,2, f_k(m+n)\le f_k(m)f_k(n)$ since a single-peaked $k$-convex permutation $\pi$  on $[m+n]$ may be uniquely separated out into single-peaked k-convex permutations on $[m]$ and $[n]$. This may be done by first taking the subsequence of $\pi$ consisting of the numbers $[m]$, call this $\tau$. To find the second sequence, we then take the remaining $n$ entries of $\pi$ and subtract $m$ from each entry, call this $\rho$. For example: $1246753$ may be separated for $m=3$ and $n=4$ into $\tau=123$ and $\rho=4675$.  It is easy to see that $\rho$ will always be a $k$-convex permutation of length $n$ by its inclusion in $\pi$. So, we need only check that $\tau$ itself is actually such a permutation. However, this is clear, as we have seen that deleting the peak of a $k$-convex permutation gives a $k$-convex permutation of a smaller length. Thus we may delete each entry of $\rho$ from $\pi$ in decreasing order to reach $\tau$, at which point we see that it is a $k$-convex permutation of length $[m]$.

Taking logarithms, this shows that $$\log f_k(m+n)\le \log f_k(m)+\log f_k(n),$$
and so, by Fekete's lemma we have that
$$\lim \frac{\log f_k(n)}{n}=L<\infty,$$
which gives us the fact (since the exponential function is continuous) that
$\lim f_k^{1/n}(n)=C_k$ exists, as asserted. This then gives that $f_k(n)=\Theta(C_k^n)$. 

\end{proof}

In fact, for any $k\ge 3$, we see that 
$\pi$ is $k$-convex only if it avoids, consecutively, a set $A_k$ of permutations. It is an open question as to what the minimum cardinality of $A_k$ is. Already, we have seen that $\vert A_1\vert=\vert A_2\vert=2$.  It would be interesting to determine whether our local $k$-convexity condition is strong enough to allow for us to make the following Stanley-Wilf conjecture type claim:

\begin{con}\label{conj}
 For each $k > 2$, $\lim_{n \to \infty} (f_k(n))^{\frac{1}{n}} = C_k$ where each $C_k$ is a constant.
\end{con}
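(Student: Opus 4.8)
The plan is to prove the conjecture exactly as the $k=1,2$ Corollary was proved: fix $k$, show that $\log f_k(n)$ is (sub)additive, and invoke Fekete's lemma. It suffices to produce \emph{either} a submultiplicative bound $f_k(m+n)\le f_k(m)f_k(n)$ \emph{or} a supermultiplicative bound $f_k(m+n)\ge f_k(m)f_k(n)$, together with the trivial $f_k(n)\le n!$ to guarantee that $C_k=\lim f_k(n)^{1/n}$ is finite; Fekete's lemma then yields the limit from either inequality. So the whole problem reduces to finding a decomposition of $k$-convex permutations that respects the convexity condition.

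The one ingredient that survives for all $k$ is peak deletion. First I would prove: for every $k\ge 0$, deleting the entry equal to $n$ from a $k$-convex permutation of length $n$ yields a $k$-convex permutation of length $n-1$. The only second differences that change are the at most two created at the site of the deleted maximum; if the maximum sat at position $p$, the new conditions are $\pi(p-2)+\pi(p+1)-2\pi(p-1)$ and $\pi(p-1)+\pi(p+2)-2\pi(p+1)$, and using the original condition at $p-1$ (resp. $p+1$) together with $\pi(p)=n$ being the global maximum, each of these is at most $k-1<k$. Iterating, the subsequence $\tau$ of a $k$-convex $\pi\in S_{m+n}$ consisting of the values $1,\dots,m$ is itself $k$-convex. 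This reproduces the ``$\tau$ is convex'' half of the $k=1,2$ argument for all $k$.

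The crux is everything the $k=1,2$ proof got for free from unimodality. For $k<3$, the Proposition showing that strongly convex permutations avoid consecutive $213$ and $312$ forces every such permutation to be single-peaked, and this is what makes the large-value subsequence $\rho$ (values $m+1,\dots,m+n$, shifted down) convex and, crucially, makes the value-split $\pi\mapsto(\tau,\rho)$ injective: from two unimodal subsequences the interleaving is forced, so $\pi$ is recoverable. For $k\ge 3$ consecutive $213$ and $312$ are permitted, so $k$-convex permutations need no longer be unimodal; as a consequence both properties fail, since $\rho$ need not be convex, and distinct permutations can share the same pair $(\tau,\rho)$ because the small and large values may legitimately be interleaved in several convexity-respecting ways. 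Repairing this is the main obstacle, and is, I expect, the reason the statement is only conjectured.

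Two avenues seem most promising for surmounting it. The first is to replace exact submultiplicativity by an almost-submultiplicative bound $f_k(m+n)\le g(m+n)\,f_k(m)f_k(n)$ with $g$ subexponential, for which one would need a value-split whose second piece is genuinely convex (most likely by recording, along with $\rho$, a bounded amount of local correction data near the split value) together with a proof that only subexponentially many convex permutations share the same enriched pair; the version of Fekete's lemma for nearly-subadditive sequences then still yields the limit. The second avenue is to revive the transfer-matrix method: show that the convex extensions of a permutation depend only on a bounded window of boundary entries, an $O(k)$-letter generalization of the four-letter abbreviation of Theorem \ref{thm:id}, so that the identically-descending equivalence has finitely many classes; a finite transfer matrix then forces a rational generating function and hence $\Theta(C_k^n)$ growth with $C_k$ the spectral radius, by Perron--Frobenius. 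The obstacle peculiar to this route is that the build-from-the-outside $L/R$ framework rests on every $k$-convex permutation beginning or ending with $1$, which holds only for $k<3$; for $k\ge 3$ one must first design a new local insertion scheme that generates all $k$-convex permutations while keeping the state space finite. Of the two, I would pursue the bounded-window finite-automaton approach, since the defining condition is local in the second differences, so only finitely much boundary information should govern how a permutation may be extended.
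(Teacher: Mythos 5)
You have correctly sensed the situation: this statement is a \emph{conjecture}, and the paper contains no proof of it --- indeed the Future Work section states explicitly that Conjecture \ref{conj} stands unsolved. So there is nothing in the paper to compare your argument against, and your submission, judged as a proof, has a genuine gap: it is a research plan in which neither proposed avenue is carried out. That said, your diagnosis is accurate and matches the paper's implicit reason for stopping at $k=2$. Your peak-deletion lemma is correct for every $k$ (the new second differences at the deletion site are bounded by the old ones because the removed entry is the global maximum), and it is precisely the one ingredient of the paper's $k\le 2$ subadditivity argument that survives; what is lost for $k\ge 3$ is the avoidance of consecutive $213$ and $312$, hence unimodality, hence both the convexity of the large-value subsequence $\rho$ and any control over how the small and large values interleave.

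What is concretely missing: for your first avenue, the entire difficulty is hidden in the phrase ``a bounded amount of local correction data'' --- for $k\ge 3$ the positions of the values exceeding $m$ need not form an interval, so the interleaving information is potentially linear in $n$ rather than bounded, and you give no construction and no subexponential multiplicity bound. For your second avenue, the bounded-window claim is unsupported: the $L/R$ scheme inserts a new minimum at an end and rests on every $k$-convex permutation with $k<3$ beginning or ending with $1$, and the proof of Theorem \ref{thm:id} itself uses the mountain shape; the convexity condition being local in \emph{positions} does not make extension behavior local, because inserting a new extreme \emph{value} affects the permutation globally. One further caution about the template you are generalizing: the injectivity you attribute to the unimodal value-split is itself false at small scales. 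Take $m=1$, $n=4$: both $12345$ and $23451$ are $1$-convex and map to the same pair $(\tau,\rho)=(1,1234)$, and indeed $f_1(5)=14>f_1(1)f_1(4)=8$, so the inequality $f_k(m+n)\le f_k(m)f_k(n)$ fails as stated even for $k=1$. Any Fekete-based attack, for $k\le 2$ or $k\ge 3$, needs the splitting map repaired (e.g.\ recording the insertion position of the large block, at the cost of a polynomial factor, which Fekete's lemma tolerates). Finally, note that the paper's own suggested route --- Marcus--Tardos via avoidance of a non-barred class --- would yield only an exponential upper bound $f_k(n)\le C^n$, not the existence of the limit, so even that route must be combined with an approximate one-sided multiplicativity of the kind you sketch.
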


If it can be shown that our local property implies that a $k$-convex permutation avoids a non-barred permutation class, then we could apply the Marcus-Tardos theorem \cite{Marcus04} to conclude that $f_k(n)\le C_k^n$ for some $1< C_k<\infty$.

\section{A Generating Function for \($k=1$\)}

In this section, we  find the generating function 
\[
F_1(q) := \sum_{n=0}^\infty f_1(n)q^n.
\] 
Note that while we can show that the limit for the growth constant exists and we can produce arbitrarily tight bounds for the limit by using larger matrices, we are still unable to enumerate these permutations exactly. We will simplify a connected piece of the transition digraph, find sufficient information about that piece in order to analyze it, and then use that information in the transition matrix to finish our analysis. 

From the 1-convex transition digraph, we see that the nodes that are descended from successive left descendants of $123$ are essentially isolated along a path in the graph. We refer to the node $123$ as $1223$ throughout this section as it simplifies our notation. 

To see this, consider a permutation $\sigma$ of the form $12(l)(l+1)$, where $l\ge 3$. Note that if this permutation right descends, then the resulting permutation, $R(\sigma) = 23(l+2)1$, will not right descend for $l-1$ generations, and after successive left descents we have $L^{l-1}R(\sigma) = 12(l+2+(l-1))l$.  This permutation left descends and right descends and thus by Theorem \ref{thm:id} is identically-descending with $12(l-1)l$. Note that as $l\ge 3$, the first two entries of the successive left descents of $R(\sigma)$ eventually take the form $12$.

Using this information, we consider the vertex induced subgraph, $T$, induced in the transfer digraph for $k=1$ by the nodes $\{12(k)(k+1):k\ge 2\}$. We adopt the convention that an edge labeled with a natural number, $k$, represents a path on $k$ edges from the tail to head of the labeled edge. By the above observations, we see that $T$ has the form illustrated in the figure.

\begin{figure}[!t]\label{topk1}
\centering
\begin{tikzpicture}[->,>=stealth',shorten >=1pt,auto,node distance=3cm,
  thick,main node/.style={circle,fill=blue!20,draw,font=\sffamily\Large\bfseries},notmain node/.style={circle,fill=white!20,draw,font=\sffamily\Large\bfseries}]

  \node[main node] (1) {1223};
  \node[main node] (2) [right of=1] {1234};
  \node[main node] (3) [right of=2] {1245};
  \node[main node] (4) [right of=3] {1256};
  %I want the border of node 5 to eventually not be black, but getting it to do this is eluding me at the moment...%
  \node[notmain node] (5) [right of=4] {...};

  \path[every node/.style={font=\sffamily\small}]
    (1) edge [bend left] node[right] {} (2)
    (2) edge node [bend left] {3} (1)
	    edge [bend left] node [right] {} (3)
    (3) edge node [bend right] {4} (2)
        edge [bend left] node[right] {} (4)
    (4) edge node [bend right] {5} (3)
        edge [bend left] node[right] {} (5)
    (5) edge node [bend right] {} (4);
   
\end{tikzpicture}
\caption{An illustration of the root nodes of $T$.}
\label{T}
\end{figure}
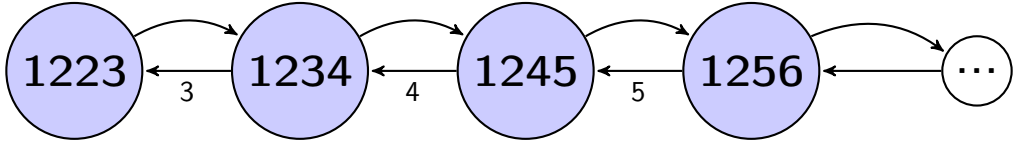

We now wish to count walks in $T$ starting from the node $1234$ and ending at $1234$.

\begin{prop}\label{k1bot}
Let $bot(q)$ be the generating function counting the number of walks in $T$ from $1223$ to $1223$ by length.  Then,

\begin{equation}\label{bot}
bot(q)=\cfrac{1}{1-\cfrac{q^4}{1-\cfrac{q^5}{1-\cfrac{q^6}{\ddots}}}}
\end{equation}
\end{prop}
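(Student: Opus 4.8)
The plan is to set up a standard first-return decomposition for closed walks in the path-like digraph $T$ and to check that it reproduces exactly the stated continued fraction. First I would fix notation by writing $v_k := 12(k)(k+1)$ for the node at level $k \geq 2$, so that $v_2 = 1223$ is the bottom vertex and $T$ is supported on $\{v_k : k \geq 2\}$. From the structure established just before the proposition and displayed in Figure~\ref{T}, $T$ has exactly two edges between consecutive levels: a forward edge $v_k \to v_{k+1}$ arising from a single left descent, which contributes a factor of $q$; and a backward edge $v_{k+1} \to v_k$ arising from the length-$(k+1)$ path ($R$ followed by $L^{k}$) identified in the preamble, which contributes a factor of $q^{k+1}$. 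There are no other edges, and in particular $v_2$ has no outgoing edge leaving $\{v_k : k \geq 2\}$.

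Next I would introduce, for each $j \geq 2$, the generating function $B_j(q)$ that counts closed walks beginning and ending at $v_j$ which never descend below level $j$ (i.e.\ stay inside the subgraph on $\{v_j, v_{j+1}, \dots\}$). Because $v_2$ is already the lowest vertex, every walk from $1223$ to $1223$ automatically stays at level $\geq 2$, so $bot(q) = B_2(q)$. The key step is the first-return decomposition: any nonempty such closed walk at $v_j$ must begin by taking the unique forward edge $v_j \to v_{j+1}$ (weight $q$), then perform an arbitrary closed walk at $v_{j+1}$ staying at level $\geq j+1$ (weight $B_{j+1}$), and finally return, the first time it comes back to level $j$, along the unique backward edge $v_{j+1} \to v_j$ (weight $q^{j+1}$). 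Summing the geometric series over the number of such excursions gives
\[
B_j(q) = \frac{1}{1 - q^{j+2} B_{j+1}(q)}.
\]
Unfolding this recursion starting from $j = 2$ yields precisely \eqref{bot}, since the numerators at the successive levels are $q^{4}, q^{5}, q^{6}, \dots$.

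Finally I would address the one genuine subtlety: the continued fraction is infinite, so I must justify that it is a well-defined element of $\Z[[q]]$ agreeing with $bot(q)$. The point is that reaching level $j$ from $v_2$ and returning requires a walk whose length grows without bound as $j \to \infty$ (each additional level costs at least one forward step plus a backward path of length $\geq j$), so the coefficient of any fixed power $q^{N}$ in $B_2$ is determined by only finitely many levels of the recursion; truncating the continued fraction at depth $d$ therefore agrees with $bot(q)$ through order $q^{N}$ once $d$ is large enough. I expect this convergence and well-definedness bookkeeping to be the only real obstacle, since the combinatorial decomposition itself is forced by the simple ``ladder'' shape of $T$; a check of the low-order terms (the shortest nontrivial closed walk $v_2 \to v_3 \to v_2$ has weight $q^{4}$, matching $bot(q) = 1 + q^{4} + \cdots$) serves as a useful sanity check.
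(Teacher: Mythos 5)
Your proposal is correct and takes essentially the same route as the paper: your $B_j$ is the paper's $H_{j-1}$, and your first-return decomposition $B_j = 1/(1 - q^{j+2}B_{j+1})$ is exactly the paper's recursion $H_k(q) = 1 + q^{3+k}H_k(q)H_{k+1}(q)$, unfolded into the continued fraction in the same way. Your only addition is the explicit check that the infinite continued fraction is well defined coefficientwise in $\Z[[q]]$, a point the paper leaves implicit when it ``repeatedly substitutes'' to solve for \eqref{bot}.
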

\begin{proof}
Let $T_k$ be the vertex induced subgraph of $G$ induced by the vertices $\{12(l+1)(l+2):l\ge k\}$. Now let $H_k(q)$ be the generating function for the number of walks from $12(k+2)(k+3)$ to itself counted by length. So, we see that $H_1(q)=bot(q)$. 

Now, note that any walk from $1223$ to itself is either the empty walk, or its first step is from $1223$ to $1234$ and its last three steps return the walk from $1234$ to $1223$. Between those steps, the walk consists of a walk beginning at $1234$ and returning to $1234$, we know these walks are counted by $H_2(q)$. So, from the above considerations we have:
\begin{equation}\label{Trec}
H_1(q)=1+q^4H_1(q)H_2(q).
\end{equation}
Similarly, by starting our walk in $T_k$ and performing the same argument with slight adjustment for the changed path length we see that
\[
H_k(q)=1+q^{3+k}H_k(q)H_{k+1}(q)
\]
Then by repeatedly substituting the values we found in \ref{Trec}, we are able to solve for \ref{bot}.
\end{proof}

The approach used here to find the generating function is similar to that introduced by Flajolet in \cite{Flajolet80} as we can associate each walk in our graph with a word in the alphabet $a_k,b_k$ described in the paper and then evaluate $a_k$ as $q$ and $b_k$ as $q^{(k+2)}$. More generally, we see that this follows by the choice to view any Motzkin path with particular weightings of steps as a walk in a digraph. Further, note that this generating function is quite similar to the generating function for the number of fountains, which has been studied in \cite{Glasser87} and \cite{Odlyzko88}.

We also wish to find the generating function for the number of walks in $T$ beginning at $1223$ counted by length, $tot(q)$, as we will need this later.  

\begin{lem}
\begin{equation}\label{total}
tot(q)=H_1(q)+qH_1(q)H_2(q)(1+q+q^2)+q^2H_1(q)H_2(q)H_3(q)(1+q+q^2+q^3)+...
\end{equation}
or
\[
tot(q)=\sum_{n\ge 0}q^n\frac{1-q^{n+1}}{1-q}\prod_{i=1}^{n+1}H_i(q).
\]
\end{lem}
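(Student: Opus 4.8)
The plan is to enumerate the walks of $T$ by their \emph{terminal vertex}, splitting the count into two families: walks that end at one of the ``spine'' vertices $N_k := 12(k+1)(k+2)$ (so $N_1 = 1223$, and $H_k$ is the self-return generating function based at $N_k$ supplied by Proposition \ref{k1bot}), and walks that end at an interior vertex of one of the contracted descending edges. Recall that, by the convention fixed before Figure \ref{T}, an edge labelled $\ell$ abbreviates a directed path on $\ell$ edges and therefore conceals $\ell-1$ interior vertices, each of which is a genuine permutation at which a walk may stop. I will compute the two families separately and then recombine them.

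For the first family I introduce $S_k(q)$, the generating function for walks that start at $N_k$ and never descend below $N_k$. Conditioning on the last visit to $N_k$ gives the first-return decomposition
\[
S_k = H_k\bigl(1 + q\,S_{k+1}\bigr),
\]
where $H_k$ collects the closed excursions based at $N_k$, the factor $q$ is the final forward step $N_k \to N_{k+1}$, and $S_{k+1}$ is the remaining walk, which by construction never returns to $N_k$. Iterating yields $S_1 = \sum_{m\ge 1} q^{m-1}\prod_{i=1}^{m} H_i$, and a count of the nested final ascents shows that the summand $q^{m-1}\prod_{i=1}^{m}H_i$ is exactly the generating function for walks from $N_1$ whose terminal vertex is $N_m$: the $m-1$ forward steps climb from $N_1$ to $N_m$, while the product records the excursions permitted at each level below (and at) $N_m$.

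For the second family, a walk terminating at an interior vertex of the descending edge $N_m \to N_{m-1}$ must first reach $N_m$ and then traverse part of that one-way path. Using the computation preceding Theorem \ref{thm:id} (where $L^{l-1}R\bigl(12(l)(l+1)\bigr)=12(2l+1)l$), the edge into $N_{m-1}$ has length $m+1$ and hence $m$ interior vertices, so a partial descent contributes $q + q^2 + \cdots + q^{m}$. Because these descending paths are directed and cannot be reversed, every such walk factors uniquely as (a walk ending at $N_m$)(a partial descent); thus this family contributes $q^{m-1}\prod_{i=1}^{m}H_i\,(q+\cdots+q^{m})$ for each $m\ge 2$, with no contribution for $m=1$ since $1223$ has no descending edge. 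Adding the two families and merging the common factor gives the coefficient $1+q+\cdots+q^{m}$ for every $m\ge 2$ and the bare term $H_1$ for $m=1$; re-indexing by $n=m-1$ produces the displayed expansion \eqref{total}, and collapsing each finite geometric sum into $(1-q^{m+1})/(1-q)$ delivers the closed form.

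The step I expect to be most delicate is the bookkeeping of the contracted edges together with the verification that the terminal-vertex decomposition is a genuine bijection. One must confirm, via the ancestor analysis underlying Theorem \ref{thm:id}, that the edge into $N_{m-1}$ really has length $m+1$ and hence exactly $m$ interior vertices; and one must argue completeness and the absence of double counting, both of which hinge on the one-way nature of the descending paths, guaranteeing that any walk stopping partway down such a path entered it at the top and never backtracked. The $m=1$ boundary case (the missing descending edge at $1223$) is the sole place where the uniform formula must be corrected, and it is exactly what forces the leading summand to be $H_1$ rather than $H_1(1+q)$.
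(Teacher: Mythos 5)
Your proof is correct and takes essentially the same route as the paper's: the paper likewise enumerates walks in $T$ by terminal vertex, getting $H_1(q)$ for walks ending at $1223$, $qH_1(q)H_2(q)$ for those ending at $1234$, and the geometric factors $1+q+\cdots+q^m$ from walks stopping partway down the one-way descending paths, so your first-return recursion $S_k = H_k(1+qS_{k+1})$ and the explicit count of interior vertices (the descent from $N_m$ having $m+1$ edges, hence $m$ interior vertices) simply make the paper's brief sketch rigorous. One point worth flagging: your geometric-sum collapse gives $(1-q^{m+1})/(1-q)$, i.e.\ $(1-q^{n+2})/(1-q)$ after the reindexing $n=m-1$, which agrees with the lemma's expanded display but not with its stated closed form $\sum_{n\ge 0}q^n\frac{1-q^{n+1}}{1-q}\prod_{i=1}^{n+1}H_i(q)$; the two displays in the lemma are mutually inconsistent, and since your derivation (like the paper's own argument and the edge labels in the figure) supports the expanded version, the exponent $n+1$ in the closed form is an off-by-one slip in the paper that should read $n+2$.
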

\begin{proof}
The proof of the above is similar to that of Proposition \ref{k1bot}. Consider the number of walks that end at the first vertex, $1223$; there are $H_1(q)$ of these. Now consider walks ending at $1234$; each of these is an extension of a walk ending at $1223$, and it takes $1$ step to reach this vertex, so there are $qH_1(q)H_2(q)$ such walks. Next, we note that the number of walks ending at each point along the path back to $1223$ from $1234$ is then $qH_1(q)H_2(q)(1+q+q^2)$. We can repeat these observations for each node in the graph and thus arrive at the generating function above. 
\end{proof}

We now will use the transfer matrix method to find the generating function. To do so, we will integrate what we have learned about $T$ into the transfer matrix framework. Note the following: whenever we consider a walk in the graph that reaches the node $1223$, the walk will then branch into other, longer walks staying in $T$ or returning to $1223$ before re-entering the bottom of the graph. By ``bottom'' we mean all vertices not descended from a left descendant of 1223, or in other words, the vertices not in $T$. Instead of keeping track of walks in the upper part of the graph individually, we can use a generating function. To do this consider a walk of length $n$ that ends on $1223$; this corresponds to a permutation of length $n+1$. We now wish to determine the number of successive descendants of this permutation, or equivalently the number of walks in the graph beginning this way. The walk will either leave $1223$ immediately or continue into $T$. If the walk continues into $T$, then we must keep track of how many walks exist in $T$ after $k$ steps. If it were impossible to leave $T$, we would be done and could simply keep track of how many walks there are in $T$ starting at $1223$. However, a walk in $T$ may return to $1223$ after some amount of time, at which point the walk branches and leaves $T$, as well as continuing back up into $T$. If we know how many walks of $l$ steps in $T$ begin and end at $1223$, we can then anticipate these walks leaving $T$ after a certain number of steps. We already have generating functions for both of these quantities, $tot(q)$ and $bot(q)$; $tot(q)$ counts how many walks of length $n$ stay in $T$ if one walk ends at $1223$, and $bot(q)$ counts how many walks in $T$  beginning at $1223$ return to $1223$ after so many steps. Thus we see that $tot(q)-bot(q)$ has for its $n$th coefficient the number of walks in $T$ of length $n$ beginning at $1223$ but not ending at $1223$.

To incorporate this into our transfer matrix framework, we must count both the number of walks in $T$ that will begin at $1223$ after so many steps in addition to the number of walks that do not begin at 1223 but remain entirely in $T$. For the number of walks that do begin at 1223, we can use the generating function $bot(q)$, as this counts the number of walks that begin and end at $1234$ and are entirely in $T$. As $bot(q)$ counts walks by length, we weight the edge between $1223$ and $1432$ by $bot(q)$ instead of $1$ in our transfer matrix. Each time a walk reaches $1223$, we will anticipate the future walks that begin at $1223$, but because those walks are $n$ steps in the future, they carry a coefficient of $q^n$. Similarly, we wish to count those walks that stay entirely in $T$. We noted before that $tot(q)-bot(q)$ counts these walks, and since they have been counted previously these permutations will not have any descendants in our transition digraph. Instead of having an edge from $1223$ to $1234$ in our transition digraph, we will have an edge to a new node $sink$, which has no outgoing edges. We will weight the edge to $sink$ by $tot(q)-bot(q)$ and thus have $tot(q)-bot(q)$ as the corresponding entry in our transfer matrix. Whenever a walk reaches $1223$, exactly two edges will be traversed. One edge returns into the digraph with a weighting reflecting future walks that would be coming from $T$. The other edge goes to a sink and thus records the number of walks that stay entirely within $T$. Now when we consider $\sum Aq^n$ all of the $q$ coefficients of $tot(q)$ and $bot(q)$ simply tell us how many steps in the future a particular walk would take from $T$, even if we did not actually have the steps for that particular walk in the graph. 

With the above observations, we can use the following transition matrix to keep track of the number of walks:

\begin{equation}\label{k1matrix}
M_1=\begin{bmatrix}
0& 0& 0&0&0\\
1&1&0&0&1\\
top(q)-bot(q)& top(q)-bot(q)&0&0&0\\
bot(q)&bot(q)&0&0&0\\
0&0&0&1&0
\end{bmatrix}
\end{equation}

Then by recalling that $(I-M_1q)^{-1}=\sum_{n\ge 0} M^nq^n$, we can invert this matrix in Mathematica and sum the first column (as done for finding the approximate growth rates earlier) to find the generating function. Similarly, this generating function only counts half of the walks of length $2$ or greater, so we double it and multiply by $q^2$ to obtain the final generating function:

\begin{equation}\label{kis2gen}
F_1(q)=1+q-2q^2\frac{1+bot(q)q^2+tot(q)q}{-1+q+bot(q)q^3}.
\end{equation}

The expansion of this generating function begins:

\[
F_1(q)=1+q+2q^2+4q^3+8q^4+14q^5+24q^6+40q^7+66q^8+106q^9+170q^{10}+...
\]
\section{Generating Function for $k=2$}
We would now like to repeat the developments of the above section for when $k=2$. We can do this with another transfer matrix and an explicit solution for walks in the corresponding infinite subgraph, which in this case consists of the descendants of the left descendants of $1234$ in the graph for $k=2$. The subgraph of interest in this case has structure similar to that from the $k = 1$ case, except now the edges that right descend return to nodes two levels below rather than one level below. We now need to keep track of the number of walks in the upper graph ending on $1245$ as well as those ending on $1256$, as each of these leads back into the original graph. Due to the more complicated structure of this graph we are unable to provide a simple generating function for the number of walks it contains, but if we define $tot'(q)$, $bot_1'(q)$ and $bot_2'(q)$ where $bot'_1(q)$ counts walks ending on $1245$ and $bot'_2(q)$ counts walks ending on $1256$ in the upper part of the graph, then we see that the generating function is:

\[
1+q-2q^2\frac{1+q+q^2+q^4(1+bot_2'(q))+q^3(1+tot(q))}{-1+q+q^2+q^4-q^7bot_2'(q)+q^5(bot_1'(q)+bot_2'(q))-q^6(bot_1'(q)+bot_2'(q))}
\]

To make use of this formula, one may manually compute the values for the generating functions that it contains. Note that solving $tot'(q)$, $bot_1'(q)$ and $bot_2'(q)$ are not part of the framework of \cite{Flajolet80} as they are not easily encoded as words over a simple alphabet.

\section{Future Work}
There are many questions that remain to be answered about locally convex permutations and words. A natural goal is to find methods of enumerating these permutations for $k\ge 3$, which is more difficult as the permutations can no longer be constructed in such a regular way. 

Another question is whether there exists an algebraic generating function in the case $k=2$?  For larger $k$? Furthermore, Conjecture \ref{conj} stands unsolved.

\section{Acknowledgments}

The research of both authors was supported by NSF Grant 1263009 and conducted during the Summer 2014 REU program at East Tennessee State University. We would like to thank Bill Kay for suggesting the problem and Anant Godbole for his suggestion about using subadditivity.

\bibliographystyle{plain}
\bibliography{forShi2}

\end{document}